\theoremstyle{plain} 
\newtheorem{theorem}{Theorem}[section]
\newtheorem{corollary}[theorem]{Corollary}
\newtheorem{proposition}[theorem]{Proposition}
\newtheorem{lemma}[theorem]{Lemma}
\theoremstyle{definition} 
\newtheorem{remark}[theorem]{Remark}
\newcommand{\R}{\mathbb{R}}
\numberwithin{equation}{section}
\renewcommand{\vec}[1]{\bm{#1}}
\newcommand{\NorAngle}{\phi}
\newcommand{\MisOriAngle}{\theta}
\title{Motion of grain boundaries with dynamic lattice misorientations
  and with triple junctions drag}
\author{Yekaterina Epshteyn}
\address[Yekaterina Epshteyn]%
{Department of Mathematics,
The University of Utah,
Salt Lake City, UT 84112, USA}
\email{epshteyn@math.utah.edu}
\author{Chun Liu}
\address[Chun Liu]%
{Department of Applied Mathematics, Illinois Institute of Technology.
Chicago, IL 60616, USA}
\email{cliu124@iit.edu}
\author{Masashi Mizuno}
\address[Masashi Mizuno]%
{Department of Mathematics, College of Science
and Technology, Nihon University, Tokyo 101-8308 JAPAN}
\email{mizuno@math.cst.nihon-u.ac.jp}
\keywords{Grain growth, grain boundary network, texture development,
  lattice misorientation, triple junction drag, energetic
variational approach, geometric evolution equations}
\subjclass[2000]{74N15, 35R37, 53C44, 49Q20}
\begin{document}

%
%


%
%
%

\begin{abstract}
Most technologically useful materials are polycrystalline
microstructures composed of a myriad of small monocrystalline grains
separated by grain boundaries. The  energetics and connectivities of grain boundaries play a
crucial role in defining the main characteristics of materials across a
wide range of scales. In this work, we propose a model for the
evolution of the grain boundary network with dynamic boundary conditions at the
triple junctions, triple junctions drag, and with dynamic lattice misorientations. Using the energetic
variational approach, we derive system of
geometric differential equations to describe motion of such
grain boundaries. Next, we relax curvature effect of the grain
boundaries to isolate the effect of the dynamics of lattice
misorientations and triple junctions drag, and we establish local
well-posedness result for the considered model.
\end{abstract}

\maketitle

\section{Introduction}
\par  Most technologically useful materials are polycrystalline
microstructures composed of a myriad of small monocrystalline grains
separated by grain boundaries. The  energetics and connectivities of grain boundaries play a
crucial role in defining the main characteristics of materials across a
wide range of scales.   More recent mesoscale experiments and simulations provide large amounts of information
about both geometric features and crystallography of the grain boundary network in material
microstructures.
\par For the time being, we will focus on a planar grain boundary network. A
classical model,  due to Mullins and Herring~\cite{doi:10.1007-978-3-642-59938-5_2,
doi:10.1063-1.1722511,doi:10.1063-1.1722742},  for the evolution of grain
boundaries in polycrystalline materials is based on the motion by mean curvature as the local
evolution law. Under the assumption that the total
grain boundary energy depends only on the surface tension of the
grain boundaries, the motion by mean curvature is consistent
with the dissipation principle for the total grain boundary energy.  In
addition, to have a well-posed model of the evolution of the grain
boundary network, one has to impose a separate condition at the triple
junctions where three grain boundaries meet \cite{MR1833000}.
Note, that at equilibrium state, the energy is minimized, which
implies that a force balance, known as the Herring Condition,
holds at the triple junctions. Herring
condition is the natural boundary condition for the system at the
equilibrium.
However, during the evolution of the grain boundaries, the normal
velocity of the boundary is proportional to a driving force. Therefore, unlike
the equilibrium state, there is no natural boundary condition for an
evolutionary system, and one must be stated. A standard choice is the
Herring condition  \cite{MR0485012,MR1240580,MR1833000,MR3612327}, and reference
therein. There are several mathematical studies
about the motion by mean curvature of grain boundaries with the Herring
condition at the triple junctions, see for example
\cite{MR1833000,MR3495423,MR2076269,MR3565976,arXiv:1611.08254,MR2075985,
  DK:BEEEKT,DK:gbphysrev, MR2772123, MR3729587, BobKohn, barmak_grain_2013, MR3316603}. There are
some computational studies too
\cite{doi:10.1023-A:1008781611991,doi:10.1016-S1359-6454(01)00446-3, MR2772123, MR2748098, MR2573343,MR3787390,MR3729587}.
\par A basic assumption in the theory and simulations of the grain
growth is the motion of the grain boundaries themselves and not the
motion of the triple junctions. However, recent experimental studies
indicate that the motion of triple junctions together with anisotropy of
the grain boundary network can have an important effect on the grain
growth \cite{barmak_grain_2013}, and see also a recent work on
dynamics of line
defects \cite{Zhang2017PhysRevLett, Zhang2018JMPS,Thomas2019PNAS}. In
this work, to investigate the evolution of the anisotropic network of
grain boundaries, we propose a new model that assumes that
interfacial/grain boundary energy density is a function of dynamic
lattice misorientations. Moreover, we impose a dynamic boundary
condition at the triple junctions, a triple junctions drag.  The
proposed model can be viewed as a multiscale model containing the local
and long-range interactions of the lattice misorientations and the
interactions of the triple junctions of the grain boundaries.  Using the
energetic variational approach, we derive the system of geometric
differential equations to describe the motion of such grain
boundaries.
%
%
Next, we relax the curvature effect of the grain boundaries to isolate
the effect of the dynamics of lattice misorientations and triple
junctions drag, and we establish local well-posedness result for the
considered model.  Note that, the current work is motivated and closely
related to the work \cite{MR1833000} (where well-posedness of the grain
boundary network model with Herring condition at the triple junctions
and with no misorientation effect was established), and to the work
\cite{DK:gbphysrev,MR2772123,MR3729587} (where a reduced 1D model based
on the dynamical system was studied for texture evolution and was used
to identify texture evolution as a gradient flow).

\par The paper is organized as follows. In
Section~\ref{sec:2} we derive a new model for the grain boundaries. In
Sections~\ref{sec:3}-\ref{sec:6} we show local well-posedness of the
proposed model under assumption of a single triple junction. Finally, in
Section~\ref{sec:7}, we extend the obtained results for a system with a
single triple junction to the grain boundary network with multiple
junctions.

\section{Derivation of the model}
\label{sec:2}
In this section we present the derivation of the model with dynamic
lattice misorientations and with triple junctions drag. This is further
extension of the model in \cite{MR1833000}, and it is motivated by the work in \cite{DK:gbphysrev,MR2772123,MR3729587}.
 
 First, we obtain our model for the evolution of the grain boundaries
using energy dissipation principle for the system. Note, while
critical events (such as, disappearance of the grains and/or grain boundaries
during coarsening of the system) pose a great challenge on the
modeling, simulation and analysis, see Fig. \ref{fig3}, here we start
with a system of one triple junction to obtain a consistent model, see
Fig. \ref{fig:1}. Thus, we start the derivation by considering
the system of three curves only, that meet at a single point -- a triple
junction $\vec{a}(t)$, see Fig. \ref{fig:1}:
\[
 \Gamma_t^{(j)}:\vec{\xi}^{(j)}(s,t),\quad
 0\leq s\leq 1,\quad
 t>0,\quad
 j=1,2,3.
\]
These curves satisfy the following conditions at the triple junction
and at the end points of the curves,
\[
 \vec{a}(t)
 :=
 \vec{\xi}^{(1)}(0,t)
 =
 \vec{\xi}^{(2)}(0,t)
 =
 \vec{\xi}^{(3)}(0,t),
 \quad
 \text{and}
 \quad
 \vec{\xi}^{(j)}(1,t)=\vec{x}^{(j)},\quad
 j=1,2,3.
\]
Here, we assume that curves $\Gamma_t^{(j)}, j=1, 2, 3$ are
sufficiently smooth functions of  parameter  $s$ (not necessarily the
arc length) and time $t$. Also, for now we assume that endpoints of
the curves $\vec{x}^{(j)}\in\R^2$ are fixed points, see Fig. \ref{fig:1}.  We define a
tangent vector $\vec{b}^{(j)}=\vec{\xi}^{(j)}_s$
and a normal vector
$\vec{n}^{(j)}=R\vec{b}^{(j)}$ (not necessarily the unit vectors) to each curve,
where $R$ is the rotation matrix through $\pi/2$. We denote
$\Gamma_t:=\Gamma_t^{(1)}\cup\Gamma_t^{(2)}\cup\Gamma_t^{(3)}$. We
also consider below a standard euclidean vector norm denoted $|\cdot |$.
\begin{figure}[hbtp]
\centering
\includegraphics[width=3.0in]{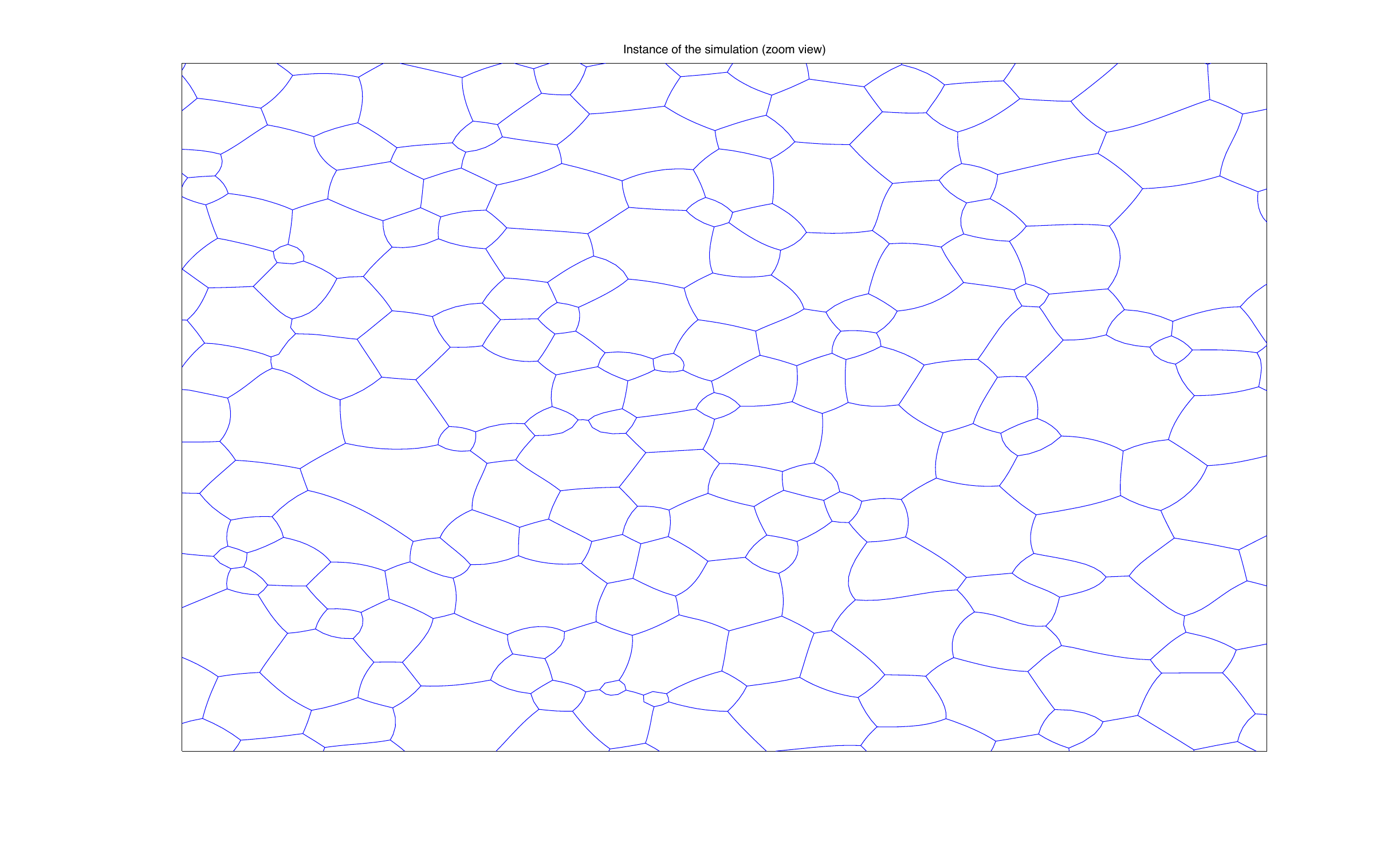}\quad

\caption{ Time instance from the simulation of the 2D grain
  boundary network with dynamic/time-dependent orientation (zoom view). }\label{fig3}
\end{figure}

Now, for $j=1,2,3$, let $\alpha^{(j)}=\alpha^{(j)}(t)$ be the lattice
orientation angle of the grain which is enclosed between grain
boundaries $\Gamma_t^{(j)}$ and $\Gamma_t^{(j+1)}$, and we set that
$\Gamma_t^{(4)}=\Gamma_t^{(1)}$ for the simplicity of the
notation. Similar to work
\cite{DK:BEEEKT,DK:gbphysrev,MR2772123,MR3729587,MR3333842}, we assume
here that the orientation $\alpha^{(j)}$ is a bounded scalar since we
consider a planar grain boundary network. In this work, we make an
assumption that lattice orientations are functions of time $t$ (we
assume that during grain growth, grains can change their lattice
orientations due to rotation), but independent of the parameter
$s$. Next, we define, the surface energy density or interfacial grain
boundary energy of $\Gamma_t^{(j)}$ as
\[
 \sigma=\sigma(\vec{n}^{(j)},\alpha^{(j-1)}-\alpha^{(j)})=\sigma(\vec{n}^{(j)},\Delta \alpha^{(j)})\geq0,
\]
where we denote  $\Delta\alpha^{(j)}:=\alpha^{(j-1)}-\alpha^{(j)}$ to
be misorientation angle across the grain boundary (a common boundary
for  two neighboring grains with orientations $\alpha^{(j-1)}$ and $\alpha^{(j)}$), and we set for
convenience
$\alpha^{(0)}:=\alpha^{(3)}$, see Fig. \ref{fig:1}. See also Remark
\ref{rem:5.5} in Section \ref{sec:5}.
\par The
total grain boundary energy of the system $\Gamma_t$ can be obtained as
\begin{equation}\label{eq:1}
 E(t)
 =
 \sum_{j=1}^3
 \int_{\Gamma_t^{(j)}}\sigma(\vec{n}^{(j)},\Delta\alpha^{(j)})
 \,d\mathscr{H}^1
 =
 \sum_{j=1}^3
 \int_{0}^{1}
 \sigma(\vec{n}^{(j)},\Delta\alpha^{(j)})|\vec{b}^{(j)}|\,ds,
\end{equation}
where $\mathscr{H}^1$ is the $1$-dimensional Hausdorff measure, (see
Fig.~\ref{fig:1}).
\begin{figure}
 \centering
 \includegraphics[height=7cm]{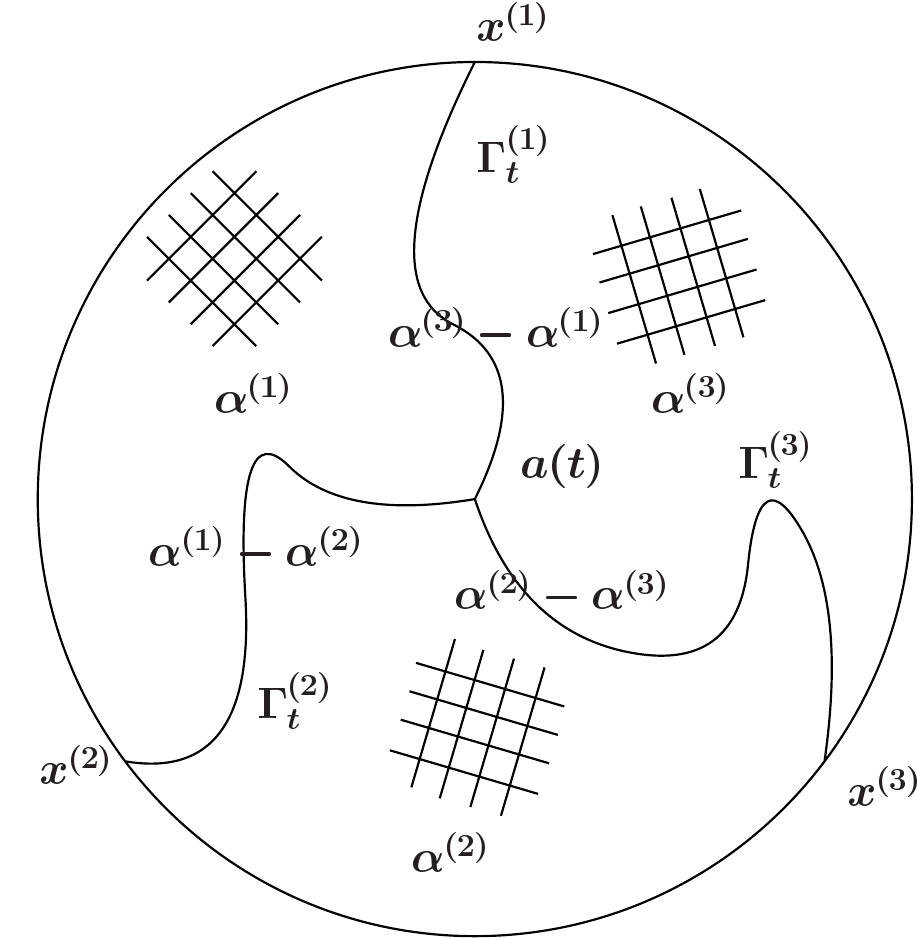}
 \caption{The model of grain boundaries/curves
 $\Gamma^{(j)}_t$ with triple junction  $\vec{a}(t)$ and with
 orientations angles (scalars) $\alpha^{(j)}$.}
 \label{fig:1}
\end{figure}
Next, we use the coordinate $(\vec{n},\MisOriAngle)\in\R^2\times\R$ for
the surface energy density $\sigma (\vec{n},\MisOriAngle)$ and assume
that $\sigma$ is taken to be positively homogeneous of degree $0$ in
$\vec{n}$.  Note, that in general, grain boundaries are identified by
lattice misorientation and the orientation of the normal vector to the
grain boundary. For simplicity of notations, we denote
$\sigma^{(j)}:=\sigma(\vec{n}^{(j)},\Delta\alpha^{(j)})$.

Let us now define grain boundary motion that will result in the
dissipation of the total grain boundary energy (\ref{eq:1}).  Denote
by $\hat{\phantom{n}}$ the normalization operator of vectors,
e.g. $\hat{\vec{n}}^{(j)}=\frac{\vec{n}^{(j)}}{|\vec{n}^{(j)}|}$. Then,
we can compute the rate of change in energy at time $t$ due to grain
boundary motion as follows:
\begin{equation}\label{eq:2}
  \begin{split}
   \frac{d}{dt}E(t)
   &=
   \sum_{j=1}^3
   \biggl(
   \int_{0}^{1}
   \nabla_{\vec{n}}\sigma^{(j)}
   \cdot\frac{d\vec{n}^{(j)}}{dt}|\vec{b}^{(j)}|\,ds
   +
   \int_{0}^{1}\sigma^{(j)}
   \frac{\vec{b}^{(j)}}{|\vec{b}^{(j)}|}
   \cdot\frac{d\vec{b}^{(j)}}{dt}\,ds \\
   &\quad +
   \int_{0}^{1}\sigma^{(j)}_\MisOriAngle
   \frac{d(\Delta \alpha^{(j)}))}{dt}|\vec{b}^{(j)}|\,ds
   \biggr) \\
   &=
   \sum_{j=1}^3
   \biggl(
   \int_{0}^{1}
   \left(
   |\vec{b}^{(j)}|
   \mathstrut^t\!R\nabla_{\vec{n}}\sigma^{(j)}
   +
   \sigma^{(j)}
   \hat{\vec{b}}^{(j)}
   \right)
   \cdot\frac{d\vec{b}^{(j)}}{dt}\,ds \\
   &\quad +
   \int_{0}^{1}
   \sigma^{(j)}_\MisOriAngle
   \frac{d(\Delta \alpha^{(j)}))}{dt}|\vec{b}^{(j)}|\,ds
   \biggr).
  \end{split}
\end{equation}
Next, consider a polar angle $\NorAngle^{(j)}$ and set
$\hat{\vec{n}}^{(j)}=(\cos\NorAngle^{(j)},\sin\NorAngle^{(j)})$.  Since
$\sigma^{(j)}$ is positively homogeneous of degree $0$ in
$\vec{n}^{(j)}$, we have
\begin{equation*}
 \nabla_{\vec{n}}\sigma\cdot\vec{n}=0,
  \quad
  \mathstrut^tR\nabla_{\vec{n}}\sigma
  =
  (\mathstrut{}^tR\nabla_{\vec{n}}\sigma\cdot\hat{\vec{n}})\hat{\vec{n}}
  ,\quad
  \sigma^{(j)}_\NorAngle
  \hat{\vec{n}}^{(j)}
  =
  |\vec{b}^{(j)}|
  \mathstrut^t\!R
  \nabla_{\vec{n}}\sigma^{(j)},
\end{equation*}
and, thus, we define the vector $\vec{T}^{(j)}$ known as the line tension or capillary stress vector,
\begin{equation*}
 \vec{T}^{(j)}
  :=\sigma^{(j)}_\NorAngle
  \hat{\vec{n}}^{(j)}
  +
  \sigma^{(j)}
  \hat{\vec{b}}^{(j)}
  =
  |\vec{b}^{(j)}|
   \mathstrut^t\!R\nabla_{\vec{n}}\sigma^{(j)}
   +
   \sigma^{(j)}
   \hat{\vec{b}}^{(j)}.
\end{equation*}
Now, using the change of variable
\[
 \frac{d\vec{b}^{(j)}}{dt}
 =
 \frac{d}{ds}
 \frac{d\vec{\xi}^{(j)}}{dt},
\]
we can rewrite (\ref{eq:2}) as:
\begin{equation}\label{eq:2a}
 \begin{split}
  \frac{d}{dt}E(t)
  &=
  \sum_{j=1}^3
  \biggl(
  \int_{0}^{1}
  \vec{T}^{(j)}
  \cdot
  \frac{d}{ds}
  \frac{d\vec{\xi}^{(j)}}{dt}\,ds
  +
  \int_{0}^{1}
  \sigma^{(j)}_\MisOriAngle
  \frac{d(\Delta \alpha^{(j)}))}{dt}|\vec{b}^{(j)}|\,ds
  \biggr) \\
  &=
  -\sum_{j=1}^3
  \int_{0}^{1}
  \vec{T}^{(j)}_s
  \cdot
  \frac{d\vec{\xi}^{(j)}}{dt}\,ds
  +
  \sum_{j=1}^3
  \int_{0}^{1}
  \sigma^{(j)}_\MisOriAngle
  \frac{d(\Delta \alpha^{(j)}))}{dt}|\vec{b}^{(j)}|\,ds
  \\
  &\quad
  -\sum_{j=1}^3
  \vec{T}^{(j)}(0,t)
  \cdot
  \frac{d\vec{a}}{dt}(t).
 \end{split}
\end{equation}
For the reader's convenience, we will recall below the following property for
a divergence of the capillary stress vector $\vec{T}^{(j)}.$\\
\begin{lemma}
 Let $\kappa^{(j)}$ is the curvature of $\Gamma_t^{(j)}$. Then
 \begin{equation}
  \label{eq:2.14}
   \vec{T}^{(j)}_s
   =
   |\vec{b}^{(j)}|
   (\sigma_{\NorAngle\NorAngle}^{(j)}+\sigma^{(j)})\kappa^{(j)}\hat{\vec{n}}^{(j)}.
 \end{equation}
\end{lemma}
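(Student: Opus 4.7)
The plan is to exploit the fact that $\sigma$ is homogeneous of degree $0$ in $\vec{n}$, so that $\sigma^{(j)}$ depends on $\vec{n}^{(j)}$ only through the angle $\theta^{(j)}$ (together with $\Delta\alpha^{(j)}$, which by assumption is independent of the parameter $s$). The whole computation then reduces to tracking how the orthonormal frame $\{\hat{\vec{n}}^{(j)},\hat{\vec{b}}^{(j)}\}$ rotates with $s$, and the $\theta$-derivatives of $\sigma^{(j)}$ appear naturally as coefficients.

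First I would record the frame derivatives. Using $\hat{\vec{n}}^{(j)}=(\cos\theta^{(j)},\sin\theta^{(j)})$ and $\hat{\vec{b}}^{(j)}=\mathstrut^t\!R\hat{\vec{n}}^{(j)}=(\sin\theta^{(j)},-\cos\theta^{(j)})$, direct differentiation gives
\[
 \partial_s\hat{\vec{n}}^{(j)} = -\theta^{(j)}_s\,\hat{\vec{b}}^{(j)},
 \qquad
 \partial_s\hat{\vec{b}}^{(j)} = \theta^{(j)}_s\,\hat{\vec{n}}^{(j)}.
\]
Second, I would relate $\theta^{(j)}_s$ to the curvature. Parametrizing by arc length $\tilde{s}$ with $\partial_{\tilde{s}}=|\vec{b}^{(j)}|^{-1}\partial_s$, the Frenet relation $\partial_{\tilde{s}}\hat{\vec{b}}^{(j)}=\kappa^{(j)}\hat{\vec{n}}^{(j)}$ combined with the second identity above yields
\[
 \theta^{(j)}_s = |\vec{b}^{(j)}|\,\kappa^{(j)}.
\]

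Third, since $\Delta\alpha^{(j)}$ has no $s$-dependence, the chain rule gives $\partial_s\sigma^{(j)}=\sigma^{(j)}_\theta\,\theta^{(j)}_s$ and $\partial_s\sigma^{(j)}_\theta=\sigma^{(j)}_{\theta\theta}\,\theta^{(j)}_s$. Differentiating
\[
 \vec{T}^{(j)} = \sigma^{(j)}_\theta\,\hat{\vec{n}}^{(j)} + \sigma^{(j)}\,\hat{\vec{b}}^{(j)}
\]
with the product rule and substituting the frame derivatives produces four terms; the two proportional to $\hat{\vec{b}}^{(j)}$ (namely $-\sigma^{(j)}_\theta\,\theta^{(j)}_s\,\hat{\vec{b}}^{(j)}$ from the first summand and $+\sigma^{(j)}_\theta\,\theta^{(j)}_s\,\hat{\vec{b}}^{(j)}$ from the second) cancel, leaving
\[
 \vec{T}^{(j)}_s = (\sigma^{(j)}_{\theta\theta}+\sigma^{(j)})\,\theta^{(j)}_s\,\hat{\vec{n}}^{(j)}
 = |\vec{b}^{(j)}|\,(\sigma^{(j)}_{\theta\theta}+\sigma^{(j)})\,\kappa^{(j)}\,\hat{\vec{n}}^{(j)},
\]
which is \eqref{eq:2.14}.

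This is essentially a bookkeeping computation rather than a deep argument; there is no real obstacle. The one point worth emphasizing is the cancellation of the tangential contributions, which is the geometric reason the capillary force is purely normal and which produces the familiar anisotropic coefficient $\sigma_{\theta\theta}+\sigma$ (the Cahn--Hoffman stiffness). Verifying that the sign conventions for $R$, $\hat{\vec{b}}^{(j)}$ and $\kappa^{(j)}$ are compatible is the most error-prone part and should be checked carefully against the conventions fixed in the derivation above.
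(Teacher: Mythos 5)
Your proof is correct and follows essentially the same route as the paper's: differentiate $\vec{T}^{(j)}=\sigma^{(j)}_\theta\hat{\vec{n}}^{(j)}+\sigma^{(j)}\hat{\vec{b}}^{(j)}$ by the product rule, use the Frenet--Serret relations (equivalently $\theta^{(j)}_s=|\vec{b}^{(j)}|\kappa^{(j)}$), and observe that the two tangential contributions cancel. The only cosmetic difference is that you invoke degree-$0$ homogeneity up front to reduce everything to $\theta$-derivatives via the chain rule, whereas the paper carries $\nabla_{\vec{n}}\sigma^{(j)}$ through the computation and converts at the end using the identities \eqref{eq:2.12}--\eqref{eq:2.13}; the underlying calculation is identical.
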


\begin{proof}
 From the Frenet-Serret formula for the non-arc length parameter,
 \begin{equation}
  \label{eq:2.10}
   \hat{\vec{b}}_s^{(j)}
   =
   |\vec{b}^{(j)}|
   \kappa^{(j)}
   \hat{\vec{n}}^{(j)},\quad
   \hat{\vec{n}}_s^{(j)}
   =
   -|\vec{b}^{(j)}|
   \kappa^{(j)}
   \hat{\vec{b}}^{(j)}.
 \end{equation}
 Thus we obtain,
 \begin{equation}
  \label{eq:2.11}
   \begin{split}
    \vec{T}_s^{(j)}
    &=
    \left(
    \nabla_{\vec{n}}\sigma^{(j)}_\NorAngle\cdot
    \vec{n}^{(j)}_s
    \right)
    \hat{\vec{n}}^{(j)}
    +
    \sigma^{(j)}_\NorAngle
    \hat{\vec{n}}^{(j)}_s
    +
    \left(
    \nabla_{\vec{n}}\sigma^{(j)}
    \cdot
    \vec{n}^{(j)}_s
    \right)
    \hat{\vec{b}}^{(j)}
    +
    \sigma^{(j)}
    \hat{\vec{b}}^{(j)}_s \\
    &=
    \left(
    \mathstrut^t\!R\nabla_{\vec{n}}\sigma^{(j)}_\NorAngle
    \cdot
    \vec{b}^{(j)}_s
    +
    |\vec{b}^{(j)}|\sigma^{(j)}\kappa^{(j)}
    \right)
    \hat{\vec{n}}^{(j)}
    +
    \left(
    -|\vec{b}^{(j)}|\sigma^{(j)}_\NorAngle\kappa^{(j)}+
    \mathstrut^t\!R\nabla_{\vec{n}}\sigma^{(j)}
    \cdot
    \vec{b}^{(j)}_s
    \right)
    \hat{\vec{b}}^{(j)}.
   \end{split}
 \end{equation}
 Since $\sigma^{(j)}$ and $\sigma_\NorAngle^{(j)}$ are positively
 homogeneous of degree $0$ in $\vec{n}^{(j)}$, we have,
 \begin{equation}
  \label{eq:2.12}
   \sigma^{(j)}_\NorAngle
   \hat{\vec{n}}^{(j)}
   =
   |\vec{b}^{(j)}|
   \mathstrut^t\!R
   \nabla_{\vec{n}}\sigma^{(j)},\quad
   \sigma^{(j)}_{\NorAngle\NorAngle}
   \hat{\vec{n}}^{(j)}
   =
   |\vec{b}^{(j)}|
   \mathstrut^t\!R
   \nabla_{\vec{n}}\sigma_\NorAngle^{(j)}.
 \end{equation}
 Using the orthogonal relation $\vec{b}^{(j)}\cdot\hat{\vec{n}}^{(j)}=0$
 and the Frenet-Serret formula \eqref{eq:2.10}, we obtain,
 \begin{equation}
  \label{eq:2.13}
   \vec{b}_s^{(j)}\cdot\hat{\vec{n}}^{(j)}
   =
   -\vec{b}^{(j)}\cdot\hat{\vec{n}}^{(j)}_s
   =
   |\vec{b}^{(j)}|^2\kappa^{(j)}.
 \end{equation}
 Plugging \eqref{eq:2.12} and \eqref{eq:2.13} into \eqref{eq:2.11}, we
 derive \eqref{eq:2.14}.

\end{proof}

Next, to ensure that the entire system of grain boundaries is dissipative, i.e.
\begin{equation*}
 \frac{d}{dt}E(t)\leq 0,
\end{equation*}
we impose Mullins theory (curvature driven growth)
\cite{doi:10.1063-1.1722742,Mullins:MS:1963} as the local evolution
law stating that the normal velocity $v_n^{(j)}$ of a grain boundary of $\Gamma_t^{(j)}$
(the rate of growth of area adjacent to the boundary $\Gamma^{(j)}_t$),
is proportional to the line force  $\vec{T}_s^{(j)}$ (to the work done through deforming the curve), through the factor of the mobility $\mu^{(j)}>0:$
\begin{equation}
 \label{eq:2.1}
  v_n^{(j)}\hat{\vec{n}}^{(j)}
  =
  \mu^{(j)}
  \frac{1}{|\vec{b}^{(j)}|}\vec{T}_s^{(j)}
  =
  \mu^{(j)}
  (
  \sigma_{\NorAngle\NorAngle}^{(j)}
  +\sigma^{(j)}
  )
  \kappa^{(j)}\hat{\vec{n}}^{(j)}
  \quad \text{on}
  \ \Gamma_t^{(j)},\quad j=1,2,3.
\end{equation}
Note, that using variation of the energy $E$ with respect to the curve $\vec{\xi}^{(j)}$,
namely,
\begin{equation*}
 v_n^{(j)}\hat{\vec{n}}^{(j)}
  =
  -\mu^{(j)}
  \frac{\delta E}{\delta \vec{\xi}^{(j)}},
\end{equation*}
one can derive the following relation for the line force
$\vec{T}_s^{(j)}$  \cite{MR1833000},
\begin{equation}
 \label{eq:2.1a}
  \mu^{(j)}
  \frac{1}{|\vec{b}^{(j)}|}\vec{T}_s^{(j)}
  =
  \mu^{(j)}
  (
  \sigma_{\NorAngle\NorAngle}^{(j)}
  +\sigma^{(j)}
  )
  \kappa^{(j)}\hat{\vec{n}}^{(j)}
  \quad \text{on}
  \ \Gamma_t^{(j)},\quad j=1,2,3.
\end{equation}
Since
$v_n^{(j)}=\frac{d\vec{\xi}^{(j)}}{dt}\cdot\hat{\vec{n}}^{(j)}$, we
obtain that,
\begin{equation}
 \label{eq:2.8}
 \vec{T}^{(j)}_s
  \cdot
  \frac{d\vec{\xi}^{(j)}}{dt}
  =\frac{1}{\mu^{(j)}}|v_n^{(j)}|^2|\vec{b}^{(j)}|
  \geq0,
\end{equation}
and, thus, the first term on the right-hand side of (\ref{eq:2a}) is
non-positive.
Next, we consider the second term on the right-hand side of (\ref{eq:2a}) which depends on the derivative of lattice misorientation, we
have that (since $\alpha^{(j)}$
is independent of $s$),
\[
  \sum_{j=1}^3
  \int_{0}^{1}
   \sigma^{(j)}_\MisOriAngle
   \frac{d(\Delta \alpha^{(j)}))}{dt}|\vec{b}^{(j)}|\,ds
   =
   \sum_{j=1}^3
   \biggl(
   \int_0^{1}
   \Bigl(
   \sigma^{(j+1)}_\MisOriAngle
   |\vec{b}^{(j+1)}|
   -
   \sigma^{(j)}_\MisOriAngle
   |\vec{b}^{(j)}|
   \Bigr)
   \,ds
   \biggr)
   \frac{d\alpha^{(j)}}{dt},
\]
where we used that $\sigma^{(4)}=\sigma^{(1)}$. To ensure,
$\frac{d}{dt}E(t) \le 0$ in (\ref{eq:2a}), we make an assumption that for
a constant $\gamma>0$, we have the following relation for the rate of
change of the lattice orientations,
\begin{equation}
 \label{eq:2.2}
 \frac{d\alpha^{(j)}}{dt}
  =
  -\gamma
  \biggl(
  \int_0^{1}
  \Bigl(
  \sigma^{(j+1)}_\MisOriAngle
  |\vec{b}^{(j+1)}|
  -
  \sigma^{(j)}_\MisOriAngle
  |\vec{b}^{(j)}|
  \Bigr)
  \,ds
  \biggr),\quad
   j=1,2,3
\end{equation}
since the relation (\ref{eq:2.2}) results in the condition,
\begin{equation}
 \label{eq:2.6}
  \sum_{j=1}^3
  \int_{0}^{1}
   \sigma^{(j)}_\MisOriAngle
   \frac{d(\Delta \alpha^{(j)}))}{dt}|\vec{b}^{(j)}|\,ds
   =
   -\frac1\gamma
   \sum_{j=1}^3
   \left|
    \frac{d\alpha^{(j)}}{dt}
   \right|^2
   \leq0
\end{equation}
on the second term in the right-hand side of (\ref{eq:2a}).
Note, that the proposed relation \eqref{eq:2.2} can also be derived using variation of
the energy $E$ with respect to  lattice orientation $\alpha^{(j)}$, namely,
\begin{equation*}
 \frac{d\alpha^{(j)}}{dt}
  =
  -\gamma
  \frac{\delta E}{\delta \alpha^{(j)}}.
\end{equation*}

\begin{remark}\label{rem:2.2}
 1. As we discussed, the misorientations are defined  using the
 orientations, $\alpha^{(j)}$ as,
 $\Delta\alpha^{(j)}=\alpha^{(j-1)}-\alpha^{(j)}$. Conversely, if the sum of the
 misorientations is zero, namely,
 $\Delta\alpha^{(1)}+\Delta\alpha^{(2)}+\Delta\alpha^{(3)}=0$, then
 the following linear relation,
 \begin{equation*}
  \left\{
   \begin{split}
    \alpha^{(3)}-\alpha^{(1)}
    &=\Delta\alpha^{(1)}, \\
    \alpha^{(1)}-\alpha^{(2)}
    &=\Delta\alpha^{(2)},
    \\
    \alpha^{(2)}-\alpha^{(3)}
    &=\Delta\alpha^{(3)}
   \end{split}
  \right.
 \end{equation*}
 can be solved in terms of $\alpha^{(j)}$, and the (inverse) mapping,
 \begin{equation*}
   \begin{pmatrix}
   \Delta\alpha^{(1)},\
    \Delta\alpha^{(2)},\
    \Delta\alpha^{(3)}
   \end{pmatrix}
   \mapsto
   \begin{pmatrix}
    c-\Delta\alpha^{(1)},\
    c+\Delta\alpha^{(3)},\
    c
   \end{pmatrix}
   =
   \begin{pmatrix}
    \alpha^{(1)},\
    \alpha^{(2)},\
    \alpha^{(3)}
   \end{pmatrix}
 \end{equation*}
 gives the orientations from the misorientations $\Delta\alpha^{(j)}$.
 Here $c$ is an arbitrary parameter. Thus, if we would formulate/derive
 equations for the misorientation evolution, instead of
 the equation for the orientation (\ref{eq:2.2}), we
 would have to impose additional constraint,
 $(\Delta\alpha^{(1)}+\Delta\alpha^{(2)}+\Delta\alpha^{(3)})(0)=0$. Furthermore,
 in that case, the orientation of each grain may not be determined
 uniquely due to the arbitrary parameter $c$.
 On the other hand, from \eqref{eq:2.2} it follows directly that,
 \begin{equation*}
  \frac{d}{dt}(\alpha^{(1)}+\alpha^{(2)}+\alpha^{(3)})=0.
 \end{equation*}
 Hence, the sum of the orientations
 $\alpha^{(1)}+\alpha^{(2)}+\alpha^{(3)}$ has to be a constant. This
 constraint for the orientations is easily determined by the initial
 configuration, and both the orientations and the misorientations can be
 determined from the equation~\eqref{eq:2.2}.\\ 2. As discussed above,
 in our work, we consider the orientation as the primary variable, and
 we enforce dissipation in the system by assuming relation
 (\ref{eq:2.2}) through the orientation. Note that, we consider the rate
 of the change on the orientation (rather than on the misorientation)
 since we study system before critical events/disappearance
 events. Moreover, this choice of the orientation as the primary
 variable is also consistent with a case of grain boundary energy
 $\sigma(\vec{n}^{(j)}, \Delta \alpha^{(j)})$. In addition, note that,
 the traditional texture distribution is the orientation
 distribution. However, in general, one can obtain the misorientation
 distribution, by considering the convolution of the orientation
 distribution with itself, or see the above remark.

  We also note that (\ref{eq:2.2}) is not a unique way to ensure
  dissipative system, and other relations for the rate of change of the
  lattice orientations which enforce dissipation may be possible. In
  this work, the particular assumption on the rate of change of the
  lattice orientation (\ref{eq:2.2}) is motivated by the approximation
  to the gradient flow dynamics near equilibrium
  \cite{DK:gbphysrev,MR3729587}. Experimental study of the dynamics of
  the lattice orientations/misorientations will be part of future
  research.



\end{remark}

Finally, as a part of $\frac{d}{dt}E(t) \le 0$ condition in
(\ref{eq:2a}), we also assume the dynamic boundary conditions for the
triple junctions, namely, for a constant $\eta>0$,
\begin{equation}
  \label{eq:2.3}
 \frac{d\vec{a}}{dt}(t)
  =
  \eta\sum_{j=1}^3\vec{T}^{(j)}(0,t),
  \quad t>0.
\end{equation}
This assumption implies that the last term in \eqref{eq:2a} satisfies,
\begin{equation}
 \label{eq:2.7}
  -\sum_{j=1}^3
  \vec{T}^{(j)}(0,t)
  \cdot
  \frac{d\vec{a}}{dt}(t)
  =
  -\frac{1}{\eta}
  \left|
    \frac{d\vec{a}}{dt}(t)
   \right|^2
  \leq0.
\end{equation}

Therefore,  we obtain from \eqref{eq:2.8}, \eqref{eq:2.6}, and
\eqref{eq:2.7}, that the entire system of grain boundaries $\Gamma_t^{(j)}$ is
dissipative, namely,
\begin{equation}\label{eq:2.8DE}
  \frac{d}{dt}E(t)
  =
  -\sum_{j=1}^3
  \int_{\Gamma_t^{(j)}}
   \frac{1}{\mu^{(j)}}|v_n^{(j)}|^2\,d\mathscr{H}^1
  -\frac1\eta
  \left|\frac{d\vec{a}}{dt}(t)\right|^2
  -\frac1\gamma
  \sum_{j=1}^3
  \left|\frac{d\alpha^{(j)}}{dt}\right|^2
  \leq 0.
\end{equation}
Finally, we combine
assumptions \eqref{eq:2.1}, \eqref{eq:2.2}, and \eqref{eq:2.3} to obtain
the following system of geometric evolution differential
equations to describe
motion of grain
boundaries $\Gamma^{(j)}_t, j=1, 2, 3$ together with a motion of the triple junction $\vec{a}(t)$:
\begin{equation}
 \label{eq:2.9}
 \left\{
  \begin{aligned}
   v_n^{(j)}
   &=
   \mu^{(j)}
   (\sigma^{(j)}_{\NorAngle\NorAngle}+\sigma^{(j)})
   \kappa^{(j)},\quad\text{on}\ \Gamma_t^{(j)},\ t>0,\quad j=1,2,3, \\
   \frac{d\alpha^{(j)}}{dt}
   &=
   -\gamma
   \biggl(
   \int_0^{1}
   \Bigl(
   \sigma^{(j+1)}_\MisOriAngle
   |\vec{b}^{(j+1)}|
   -
   \sigma^{(j)}_\MisOriAngle
   |\vec{b}^{(j)}|
   \Bigr)
   \,ds
   \biggr),\quad
   j=1,2,3,
   \\
   \frac{d\vec{a}}{dt}(t)
   &=
   \eta\sum_{k=1}^3\vec{T}^{(k)}(0,t)
   =
    \eta\sum_{k=1}^3
    (\sigma_\NorAngle^{(k)}\hat{\vec{n}}^{(k)}
    +\sigma^{(k)}\hat{\vec{b}}^{(k)}
    )(0,t),
   \quad t>0, \\
   \Gamma_t^{(j)}
   &:
   \vec{\xi}^{(j)}(s,t),\quad
   0\leq s\leq 1,\quad
   t>0,\quad
   j=1,2,3, \\
   \vec{a}(t)
   &=
   \vec{\xi}^{(1)}(0,t)
   =
   \vec{\xi}^{(2)}(0,t)
   =
   \vec{\xi}^{(3)}(0,t),
   \quad
   \text{and}
   \quad
   \vec{\xi}^{(j)}(1,t)=\vec{x}^{(j)},\quad
   j=1,2,3.
  \end{aligned}
  \right.
\end{equation}
\begin{remark}
The entire system \eqref{eq:2.9} satisfies energy
dissipation principle \eqref{eq:2.8DE}. However, it is important to
note, that there are three independent relaxation time scales in the
system \eqref{eq:2.9}, namely, $\mu^{(j)},
\gamma$ and $\eta$ (length, misorientation and position of the
triple junction). Classical approach is to let $\gamma
\rightarrow\infty$ and  $\eta \rightarrow\infty$.
\end{remark}
In this work, we let $\mu^{(j)}\rightarrow\infty$, and set $\gamma=\eta=1$ to study the effect of the dynamics of lattice orientations
$\alpha^{(j)}(t), j=1, 2, 3$ together with the effect of the dynamics of a triple
junction $\vec{a}(t)$ on a grain boundary motion.
Then, in this limit, $\Gamma^{(j)}_t$ becomes a line segment from the triple junction $\vec{a}(t)$
to the boundary point $\vec{x}^{(j)}$.  Hence, we have
\begin{equation*}
 \left\{
  \begin{aligned}
   \vec{\xi}^{(j)}(s,t)
   &=
   \vec{a}(t)+s\vec{b}^{(j)}(t),\quad
   0\leq s\leq 1,\quad
   t>0,\quad
   j=1,2,3, \\
   \vec{a}(t)+\vec{b}^{(j)}(t)
   &=
   \vec{x}^{(j)},
   \quad
   j=1,2,3.
  \end{aligned}
  \right.
\end{equation*}
 We assume that the surface tension $\sigma$ is independent
of the normal vector $\vec{n}$. Hereafter, we further assume the
following three conditions for the surface tension $\sigma$. First, we
assume positivity, namely, there exists a positive constant
$\Cl{const:2.Assumption1}>0$ such that,
\begin{equation}
 \label{eq:2.Assumption1}
  \sigma(\MisOriAngle)\geq \Cr{const:2.Assumption1},
\end{equation}
for $\MisOriAngle\in\R$. Second, we assume convexity, for all
$\MisOriAngle\in\R,$
\begin{equation}
 \label{eq:2.Assumption2}
  \sigma_\MisOriAngle(\MisOriAngle)\MisOriAngle\geq0.
\end{equation}
Furthermore, we assume,
\begin{equation}
 \label{eq:2.Assumption3}
  \sigma_\MisOriAngle(\MisOriAngle)=0
  \ \text{if and only if}\
  \MisOriAngle=0.
\end{equation}

\begin{remark}\label{rem:2.4}

1.  In this work we assume a more general surface energy
$\sigma(\Delta \alpha^{(j)})$ (\ref{eq:2.Assumption1}), (\ref{eq:2.Assumption3}), since
we consider a non-equilibrium state at time scale  $\mu^{(j)}\rightarrow\infty$ and
$\gamma=\eta=1$. Note that a different example of Read-Shockley
type surface energy \cite{ReadShockley} is the classical example of
the grain boundary energy derived under the assumption of
small misorientation angle $\Delta\alpha^{(j)}$, and the assumption of the
equilibrium state for a single fixed grain boundary at time scale
$\mu^{(j)}\rightarrow\infty, \eta \rightarrow\infty$ and $\gamma=0$.\\
2. In this work, for simplicity we consider cases of surface tensions without
normal dependence. This assumption is not as restrictive since our model is in terms of the orientation, instead of misorientation, as we had discussed in Remark \ref{rem:2.2}. Note also, that the convexity
  condition (\ref{eq:2.Assumption2}) is not
  needed for local existence results and dissipation estimates for the
  energy, Sections
  \ref{sec:4} -\ref{sec:5} and Section \ref{sec:7}. The condition
  (\ref{eq:2.Assumption2}) is essentially used to show the
  misorientation/orientation estimates, see Sections \ref{sec:3}, \ref{sec:5}
  and \ref{sec:7}, and, as a part of future work, we will investigate
  possibility of relaxing this assumption to derive similar estimates. In addition, in this work, to show uniqueness of
  the solution to (\ref{eq:4.2}), we proceed using
  misorientation/orientation estimates from Section \ref{sec:5}.
  However, one can obtain uniqueness result without the use of those
  estimates, and instead using the estimate \eqref{eq:4.23}, in proof of Theorem \ref{thm:4.1}.

Thus, the system of geometric evolution differential equations
\eqref{eq:2.9} becomes the following system of ordinary differential
equations (ODE):
\begin{equation}
 \label{eq:2.5}
 \left\{
  \begin{aligned}
   \frac{d\alpha^{(j)}}{dt}
   &=
   -
   \Bigl(
   \sigma_\MisOriAngle(\Delta\alpha^{(j+1)})|\vec{b}^{(j+1)}|
   -
   \sigma_\MisOriAngle(\Delta\alpha^{(j)})|\vec{b}^{(j)}|
   \Bigr)
   ,
   \quad
   j=1,2,3, \\
   \frac{d\vec{a}}{dt}(t)
   &=
   \sum_{j=1}^3
   \sigma(\Delta\alpha^{(j)})
   \frac{\vec{b}^{(j)}}{|\vec{b}^{(j)}|},
   \quad t>0, \\
   \vec{a}(t)+\vec{b}^{(j)}(t)
   &=
   \vec{x}^{(j)},
   \quad
   j=1,2,3.
  \end{aligned}
 \right.
\end{equation}
 Below, we continue with a study of the local well-posedness of
 the problem \eqref{eq:2.5} with the initial data given by
 $\alpha^{(1)}_0,\alpha^{(2)}_0,\alpha^{(3)}_0,\vec{a}_0$.
  \begin{remark}\label{rem:2.5}
  1. Note, that the reduced model (\ref{eq:2.5}) is not a
  standard ODE system. This is the ODE system where each variable is
  locally constrained. Moreover, local well-posedness result  (e.g. local
  existence result) for the
  original model (\ref{eq:2.9}) will not imply local well-posedness
  result for the reduced system  (\ref{eq:2.5}) (it is unknown
  if the reduced model  (\ref{eq:2.5}) is actually a small perturbation of
  (\ref{eq:2.9}).).\\
   2. The reduced model (\ref{eq:2.5}) captures the dynamics of the
 orientations/misorientations and the triple junctions, and at the same
 time is more accessible for the analysis than the model
 (\ref{eq:2.9}). In addition, the system (\ref{eq:2.5}) is
 consistent/motivated by the model in \cite{DK:gbphysrev,MR2772123}. The
 well-posedness analysis of (\ref{eq:2.5}) is a step towards similar
 analysis for the model in \cite{DK:gbphysrev,MR2772123}, as well as for
 the original system (\ref{eq:2.9}).
\end{remark}

\section{Equilibrium}
\label{sec:3}
We study an associated equilibrium solution of the system
\eqref{eq:2.5}, namely,
\begin{equation}
 \label{eq:3.3}
  \left\{
   \begin{aligned}
    0
    &=
    \Bigl(
    \sigma_\MisOriAngle(\Delta\alpha_\infty^{(j+1)})|\vec{b}_\infty^{(j+1)}|
    -
    \sigma_\MisOriAngle(\Delta\alpha_\infty^{(j)})|\vec{b}_\infty^{(j)}|
    \Bigr), \\
    \vec{0}
    &=
    \sum_{j=1}^3
    \left(
    \sigma(\Delta\alpha_\infty^{(j)})
    \right)
    \frac{\vec{b}_\infty^{(j)}}{|\vec{b}_\infty^{(j)}|},
    \\
    \vec{a}_\infty+\vec{b}^{(j)}_\infty
    &=
    \vec{x}^{(j)},\quad
    j=1,2,3.
   \end{aligned}
  \right.
\end{equation}
To consider the equilibrium system \eqref{eq:3.3}, we assume that each
Dirichlet point $\vec{x}^{(j)}$ does not coincide with the other
Dirichlet point.

\begin{lemma}
 \label{lem:3.2}
 Let
 $(\alpha_\infty^{(1)},\alpha_\infty^{(2)},\alpha_\infty^{(3)},\vec{a}_\infty)$
 be a solution of the equilibrium system \eqref{eq:3.3}. Assume
 \eqref{eq:2.Assumption2} and \eqref{eq:2.Assumption3}. Then
 $\alpha_\infty^{(1)}=\alpha_\infty^{(2)}=\alpha_\infty^{(3)}$.
\end{lemma}

\begin{proof}
 We multiply the first equation of \eqref{eq:3.3} by
 $\alpha_\infty^{(j)}$ and sum to $j=1,2,3$, to obtain
 \begin{equation}
   0= \sum_{j=1}^3
   \Bigl(
   \sigma_\MisOriAngle(\Delta\alpha_\infty^{(j+1)})|\vec{b}_\infty^{(j+1)}|
   -
   \sigma_\MisOriAngle(\Delta\alpha_\infty^{(j)})|\vec{b}_\infty^{(j)}|
   \Bigr)
   \alpha^{(j)}_\infty
   =
  \sum_{j=1}^3
  \Bigl(
  \sigma_\MisOriAngle(\Delta\alpha_\infty^{(j)})  |\vec{b}_\infty^{(j)}|
   \Bigr) \Delta\alpha^{(j)}_\infty.
 \end{equation}
 Note that, at least two of the terms $|\vec{b}_\infty^{(j)}|, j=1,2,3$ are non zero,
 otherwise it will contradict the assumption that the Dirichlet points
 $\vec{x}^{(j)}$ are distinct. Hence, from \eqref{eq:2.Assumption2}-\eqref{eq:2.Assumption3},
 we obtain that $\alpha_\infty^{(1)}=\alpha_\infty^{(2)}=\alpha_\infty^{(3)}$.
\end{proof}

%
%
%

From Lemma \ref{lem:3.2}, it follows that,
in the equilibrium state,
there is no lattice misorientation between neighboring grains that have
grain boundaries meeting at that triple junction.  As a consequence, the
equilibrium system \eqref{eq:3.3} becomes,
\begin{equation}
 \label{eq:3.6}
  \left\{
   \begin{aligned}
    \vec{0}
    &=
    \sum_{j=1}^3
    \frac{\vec{b}_\infty^{(j)}}{|\vec{b}_\infty^{(j)}|},
    \\
    \vec{a}_\infty+\vec{b}^{(j)}_\infty
    &=
    \vec{x}^{(j)},\quad
    j=1,2,3.
   \end{aligned}
  \right.
\end{equation}
The equation \eqref{eq:3.6} is related to the Fermat-Torricelli
problem. More precisely, if we have that,  for each $i=1, 2, 3$,
\begin{equation}
 \label{eq:3.5}
  \left|
   \sum_{j=1,\ i\neq j}^3
   \frac{\vec{x}^{(j)}-\vec{x}^{(i)}}{|\vec{x}^{(j)}-\vec{x}^{(i)}|}
  \right|
  >1,
\end{equation}
then $\vec{a}_\infty$ is the unique minimizer of the function,
 \begin{equation}
  \label{eq:3.7}
   f(\vec{a})
   =
   \sum_{j=1}^{3}|\vec{a}-\vec{x}^{(j)}|,
   \quad
   \vec{a}\in\R^2,
 \end{equation}
 and $\vec{a}_\infty\neq\vec{x}^{(j)}$ for $j=1,2,3$ (See \cite[Theorem
 18.28]{MR1677397}).  Note,  that the assumption \eqref{eq:3.5}
 satisfies if and only if all three angles of the triangle, formed by
 vertices located at the nodes $\vec{x}^{(1)}$, $\vec{x}^{(2)}$, $\vec{x}^{(3)}$, are less than $120^{\circ}$.

%

\section{Local existence}
\label{sec:4}
Here, we discuss local existence which validates the consistency of
the proposed model. Let $\vec{x}^{(j)}\in\R^2$, $\vec{\alpha}_0=(\alpha_0^{(1)},\alpha_0^{(2)},\alpha_0^{(3)})\in\R^3$, and
$\vec{a}_0\in\R^2$ be given initial data and we consider the local existence
of the problem of \eqref{eq:2.5}, namely
\begin{equation}
 \label{eq:4.2}
 \left\{
  \begin{aligned}
   \frac{d\alpha^{(j)}}{dt}
   &=
   -
   \Bigl(
   \sigma_\MisOriAngle(\Delta\alpha^{(j+1)})|\vec{b}^{(j+1)}|
   -
   \sigma_\MisOriAngle(\Delta\alpha^{(j)})|\vec{b}^{(j)}|
   \Bigr)
   ,
   \quad
   j=1,2,3, \\
   \frac{d\vec{a}}{dt}(t)
   &=
   \sum_{j=1}^3
   \sigma(\Delta\alpha^{(j)})
   \frac{\vec{b}^{(j)}}{|\vec{b}^{(j)}|},
   \quad t>0, \\
   \vec{\alpha}(t)
   &=
   (\alpha^{(1)}(t),\alpha^{(2)}(t),\alpha^{(3)}(t)),\quad
   t>0,\\
   \vec{a}(t)+\vec{b}^{(j)}(t)
   &=
   \vec{x}^{(j)},\quad
   t>0,
   \quad
   j=1,2,3, \\
   \vec{\alpha}(0)&=\vec{\alpha}_0,\quad
   \vec{a}(0)=\vec{a}_0.
  \end{aligned}
  \right.
\end{equation}
Assume for each $i=1,2,3$,
\begin{equation}
 \label{eq:4.13}
  \left|
   \sum_{j=1,\ i\neq j.}^3
   \frac{\vec{x}^{(j)}-\vec{x}^{(i)}}{|\vec{x}^{(j)}-\vec{x}^{(i)}|}
  \right|
  >1.
\end{equation}
We denote by $\vec{a}_\infty\neq \vec{x}^{(j)}$ for each
$j=1,2,3$, a solution to the system,
\begin{equation}
 \label{eq:4.3}
  \left\{
   \begin{aligned}
    \vec{0}
    &=
    \sum_{j=1}^3
    \frac{\vec{b}_\infty^{(j)}}{|\vec{b}_\infty^{(j)}|},
    \\
    \vec{a}_\infty+\vec{b}^{(j)}_\infty
    &=
    \vec{x}^{(j)},
    \quad
    j=1,2,3.
   \end{aligned}
  \right.
\end{equation}
The point $\vec{a}_\infty$  is a triple junction point (see Section \ref{sec:3}).

\begin{theorem}[Local existence]
 \label{thm:4.1}
 Let $\vec{x}^{(1)}$, $\vec{x}^{(2)}$, $\vec{x}^{(3)}\in \R^2$,
 $\vec{a}_0\in\R^2$, and $\vec{\alpha}_0\in\R^3$ be given initial
 data.  Assume condition
 \eqref{eq:4.13} for $i=1,2,3$,  and let $\vec{a}_\infty$ be a solution of
 \eqref{eq:4.3}. Further,  assume that for all $j=1,2,3$,
 \begin{equation}
  \label{eq:4.10}
   |\vec{a}_0-\vec{a}_\infty|<\frac12|\vec{b}^{(j)}_\infty|.
 \end{equation}
 Then, there exists a local in time solution $(\vec{\alpha}, \vec{a})$
 of \eqref{eq:4.2} on $[0,T_{\text{max}})$, such that
 \begin{equation}
  \label{eq:4.21}
  |\vec{a}(t)-\vec{a}_\infty|<|\vec{b}^{(j)}_\infty|
   \quad
   \text{for all}\quad
   j=1,2,3,\ \text{and}\ 
   0\leq t<T_{\text{max}}.
 \end{equation}
 Furthermore, the maximal existence time $T_{\text{max}}$ of the solution is estimated by 
 \begin{equation}
  \label{eq:4.22}
  T_{\text{max}}
   \geq
   \min
  \left\{
   \frac{|\vec{\alpha}_0|}{4(M_1+8M_2|\vec{\alpha}_0|)\sum_{j=1}^3|\vec{b}^{(j)}_\infty|},\
   \frac{|\vec{a}_0-\vec{a}_\infty|}{3M_0}
  ,\
  \frac{1}{12M_1},\
  \frac{1}{8M_0\sum_{j=1}^3\frac{1}{|\vec{b}_\infty^{(j)}|-2|\vec{a}_0-\vec{a}_\infty|}}
 \right\},
 \end{equation}
 where 
\begin{equation*}
 M_0:=\sup_{|\MisOriAngle|
  \leq
  4|\vec{\alpha}_0|}|\sigma(\MisOriAngle)|,\quad
  M_1:=\sup_{|\MisOriAngle|\leq
  4|\vec{\alpha}_0|}|\sigma_\MisOriAngle(\MisOriAngle)|,\quad
  M_2
  :=
  \sup_{|\MisOriAngle_1|, |\MisOriAngle_2|\leq 4|\vec{\alpha}_0|}
  \frac{|\sigma_\MisOriAngle(\MisOriAngle_1)-\sigma_\MisOriAngle(\MisOriAngle_2)|}
  {|\MisOriAngle_1-\MisOriAngle_2|}.
\end{equation*}
\end{theorem}

\begin{remark}
 The Theorem \ref{thm:4.1} provides not only existence of
 the local in time
 solution for the model \eqref{eq:4.2}, but it also
 gives the local existence of the triple
 junction. The estimate \eqref{eq:4.21}
 guarantees that $\vec{a}(t)$ is the position of the triple junction formed by the
 grain boundaries $\vec{x}^{(j)}-\vec{a}(t)$. Note,  if $\vec{a}(t)$
 is sufficiently far
 from the position of the triple junction $\vec{a}_\infty$ of the equilibrium state, for
 instance if $\vec{x}^{(j)}-\vec{x}^{(k)}$ is a part of
 $\vec{a}(t)-\vec{x}^{(k)}$, then $\vec{a}(t)$ might not be the triple
 junction. Further, (\ref{eq:4.22}) gives the explicit dependence of the maximal
 existence time $T_{\text{max}}$ on $|\vec{a}_0-\vec{a}_\infty|$. This
 is an important result for the analysis of the global in time solution which
 will be part of a forthcoming work.
\end{remark}

To show Theorem \ref{thm:4.1}, we construct a contraction mapping on a complete metric space. Let $\Cl{const:4.1}$, $\Cl{const:4.2}>0$ and
$T>0$ be positive constants that we will define later, and denote,
\begin{equation*}
 X_T:=
 \{
 (\vec{\alpha},\vec{a})\in C([0,T]\,;\,\R^3\times\R^2),\
 \|\vec{\alpha}\|_{C([0,T])}\leq \Cr{const:4.1},\
 \|\vec{a}-\vec{a}_\infty\|_{C([0,T])}\leq \Cr{const:4.2}
 \}.
\end{equation*}

Note that in the definition of the space $X_T$,  we
  use the position of the triple junction $\vec{a}_\infty$ at the equilibrium
state as the point of reference, rather than the
position of the
triple junction $\vec{a}_0$ at the initial time as one would consider in
the classical ODE theory. Such definition of the space $X_T$ is
employed in order to obtain the estimates on the position of the triple junctions from the one of the equilibrium state $\vec{a}_\infty$,
\eqref{eq:4.21}, as well as to derive the maximal existence time estimate
\eqref{eq:4.22}.

Next, define for $(\vec{\alpha},\vec{a})\in X_T$, $i=1,2,3$, and $t>0$
\begin{equation}
 \begin{split}
  \Phi^{(i)}(\vec{\alpha},\vec{a})(t)
  &:=\alpha^{(i)}_0
  -\int_0^t
  \left(
  \sigma_\MisOriAngle(\Delta\alpha^{(j+1)}(\tau))|\vec{b}^{(j+1)}(\tau)|
  -
  \sigma_\MisOriAngle(\Delta\alpha^{(j)}(\tau))|\vec{b}^{(j)}(\tau)|
  \right)
  \,d\tau, \\
  \Psi(\vec{\alpha},\vec{a})(t)
  &:=\vec{a}_0
  +\sum_{j=1}^3
  \int_0^t
  \sigma(\alpha(\tau))
  \frac{\vec{b}^{(j)}(\tau)}{|\vec{b}^{(j)}(\tau)|}
  \,d\tau,
 \end{split}
\end{equation}
where $\vec{b}^{(j)}(\tau)=\vec{x}^{(j)}-\vec{a}(\tau)$. Our goal now is
to show that $(\Phi=(\Phi^{(1)},\Phi^{(2)},\Phi^{(3)}),\Psi)$ is a
contraction mapping on $X_T$ for the appropriate choice of positive
constants $\Cr{const:4.1}$, $\Cr{const:4.2}$, and $T>0$.
Hereafter we define,
\begin{equation*}
 M_0:=\sup_{|\MisOriAngle|
  \leq
  2C_2}|\sigma(\MisOriAngle)|,\quad
  M_1:=\sup_{|\MisOriAngle|\leq
  2C_2}|\sigma_\MisOriAngle(\MisOriAngle)|,\quad
  M_2
  :=
  \sup_{|\MisOriAngle_1|, |\MisOriAngle_2|\leq 2\Cr{const:4.1}}
  \frac{|\sigma_\MisOriAngle(\MisOriAngle_1)-\sigma_\MisOriAngle(\MisOriAngle_2)|}
  {|\MisOriAngle_1-\MisOriAngle_2|}.
\end{equation*}
Later, the constant $\Cr{const:4.1}$ will be taken to be
$2|\vec{\alpha}_0|$. Next, two Lemmas \ref{lem:4.2}, \ref{lem:4.3} show that $\Phi$ and $\Psi$ is a map on $X_T$.
 \begin{lemma}
 \label{lem:4.2}
 If the conditions below are satisfied,
 \begin{equation}
  \label{eq:4.4}
   2|\vec{\alpha}_0|\leq \Cr{const:4.1},
 \end{equation}
 and
 \begin{equation}
  \label{eq:4.5}
   (2M_1+4M_2\Cr{const:4.1})
   (|\vec{b}^{(1)}_\infty|
   +|\vec{b}^{(2)}_\infty|+|\vec{b}^{(3)}_\infty|+3\Cr{const:4.2})T
   \leq
   \frac12\Cr{const:4.1},
 \end{equation}
  then $|\Phi(\vec{\alpha},\vec{a})|\leq\Cr{const:4.1}$ for all
 $(\vec{\alpha},\vec{a})\in X_T$.
 \end{lemma}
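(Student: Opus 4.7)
The plan is a direct estimate of $\Phi$ using the triangle inequality on the integral definition, combined with the linear bound on $\mathbb{B}(\tau)\vec{\alpha}(\tau)$ provided by Lemma \ref{lem:3.3}, and the geometric fact that on $X_T$ the vectors $\vec{b}^{(j)}(\tau) = \vec{x}^{(j)} - \vec{a}(\tau)$ stay close to $\vec{b}^{(j)}_\infty$.

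First, I would fix $(\vec{\alpha},\vec{a}) \in X_T$ and $t \in [0,T]$, and apply the triangle inequality to the defining formula of $\Phi$, obtaining
\begin{equation*}
 |\Phi(\vec{\alpha},\vec{a})(t)|
 \le
 |\vec{\alpha}_0|
 +
 \int_0^t |\mathbb{B}(\tau)\vec{\alpha}(\tau)|\,d\tau.
\end{equation*}
Condition \eqref{eq:4.4} immediately controls the first term by $\Cr{const:4.1}/2$, so everything reduces to estimating the integrand.

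For the integrand, I would apply Lemma \ref{lem:3.3} with the choice $c_j = |\vec{b}^{(j)}(\tau)|$, which are non-negative, yielding
\begin{equation*}
 |\mathbb{B}(\tau)\vec{\alpha}(\tau)|
 \le
 3\bigl(|\vec{b}^{(1)}(\tau)|+|\vec{b}^{(2)}(\tau)|+|\vec{b}^{(3)}(\tau)|\bigr)
 |\vec{\alpha}(\tau)|.
\end{equation*}
Since $\vec{b}^{(j)}(\tau) = \vec{b}^{(j)}_\infty + (\vec{a}_\infty - \vec{a}(\tau))$, the triangle inequality together with the defining bound $\|\vec{a}-\vec{a}_\infty\|_{C([0,T])} \le \Cr{const:4.2}$ from $X_T$ gives $|\vec{b}^{(j)}(\tau)| \le |\vec{b}^{(j)}_\infty| + \Cr{const:4.2}$, and using $|\vec{\alpha}(\tau)| \le \Cr{const:4.1}$ one obtains
\begin{equation*}
 \int_0^t |\mathbb{B}(\tau)\vec{\alpha}(\tau)|\,d\tau
 \le
 3\bigl(|\vec{b}^{(1)}_\infty|+|\vec{b}^{(2)}_\infty|+|\vec{b}^{(3)}_\infty|
 +3\Cr{const:4.2}\bigr)\Cr{const:4.1}\,T.
\end{equation*}
Condition \eqref{eq:4.5} then bounds this by $\Cr{const:4.1}/2$. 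Combining both halves yields $|\Phi(\vec{\alpha},\vec{a})(t)|\le\Cr{const:4.1}$, uniformly in $t\in[0,T]$.

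No real obstacle is expected here; the proof is a straightforward application of Lemma \ref{lem:3.3} and the defining constraints of $X_T$. The only point requiring minor care is the passage from $|\vec{b}^{(j)}(\tau)|$ to $|\vec{b}^{(j)}_\infty| + \Cr{const:4.2}$, ensuring that the perturbation of the triple junction enters the estimate in the same form as appears in the hypothesis \eqref{eq:4.5}.
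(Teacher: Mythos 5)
Your proof is correct and follows essentially the same route as the paper: triangle inequality on the integral formula for $\Phi$, the bound $|\mathbb{B}(\tau)\vec{\alpha}(\tau)|\le 3(\sum_j|\vec{b}^{(j)}(\tau)|)|\vec{\alpha}(\tau)|$ from Lemma \ref{lem:3.3}, the estimate $|\vec{b}^{(j)}(\tau)|\le|\vec{b}^{(j)}_\infty|+\Cr{const:4.2}$ from the definition of $X_T$, and then the two hypotheses to close the bound at $\Cr{const:4.1}/2+\Cr{const:4.1}/2$. No issues.
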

\begin{proof}
 [Proof of Lemma \ref{lem:4.2}]
 By the triangle inequality, for $0\leq t\leq T$,
 \[
 \begin{split}
  |\Phi(\vec{\alpha},\vec{a})(t)|
  &\leq
  |\vec{\alpha}_0|
  +
  \sum_{j=1}^3
  \left|
  \int_0^t
  \left(
  \sigma_\MisOriAngle(\Delta\alpha^{(j+1)}(\tau))|\vec{b}^{(j+1)}(\tau)|
  -
  \sigma_\MisOriAngle(\Delta\alpha^{(j)}(\tau))|\vec{b}^{(j)}(\tau)|
  \right)
  \,d\tau
  \right| \\
  &\leq
  |\vec{\alpha}_0|
  +
  \sum_{j=1}^3
  \bigg(
  \int_0^t
  |
  \sigma_\MisOriAngle(\Delta\alpha^{(j+1)}(\tau))
  -
  \sigma_\MisOriAngle(\Delta\alpha^{(j)}(\tau))
  ||\vec{b}^{(j+1)}(\tau)|
  \,d\tau \\
  &\quad
  +
  \int_0^t
  |\sigma_\MisOriAngle(\Delta\alpha^{(j)}(\tau))|
  \left|
  |\vec{b}^{(j+1)}(\tau)|
  -
  |\vec{b}^{(j)}(\tau)|
  \right|
  \,d\tau
  \bigg).
\end{split}
 \]
 Next, using that $|\Delta\alpha^{(j)}|\leq 2\Cr{const:4.1}$, and that,
 \begin{equation*}
  |
   \sigma_\MisOriAngle(\Delta\alpha^{(j+1)}(\tau))
   -
   \sigma_\MisOriAngle(\Delta\alpha^{(j)}(\tau))
   |
   \leq
   M_2|\Delta\alpha^{(j+1)}(\tau)-\Delta\alpha^{(j)}(\tau)|
   \leq
   4M_2\Cr{const:4.1},
 \end{equation*} 
 we have that,
 \begin{equation*}
  |\Phi(\vec{\alpha},\vec{a})(t)| \leq
   |\vec{\alpha}_0|
   +
   (2M_1+4M_2\Cr{const:4.1})T
   \sum_{j=1}^3
   \sup_{0\leq \tau\leq T}|\vec{b}^{(j)}(\tau)|.
 \end{equation*}

 On the other hand, for $j=1,2,3$,
 \begin{equation}
  \label{eq:4.16}
   |\vec{b}^{(j)}(t)|
   =|\vec{x}^{(j)}-\vec{a}_\infty+\vec{a}_\infty-\vec{a}(t)|
   \leq |\vec{b}^{(j)}_\infty|+\Cr{const:4.2}.
 \end{equation}
 Therefore, from \eqref{eq:4.4} and \eqref{eq:4.5},
 \[
 |\Phi(\vec{\alpha},\vec{a})(t)|
 \leq
 |\vec{\alpha}_0|
 +
 (2M_1+4M_2\Cr{const:4.1})
 (|\vec{b}^{(1)}_\infty|
 +|\vec{b}^{(2)}_\infty|+|\vec{b}^{(3)}_\infty|+3\Cr{const:4.2})T
 \leq
 \Cr{const:4.1}.
 \]
\end{proof}

\begin{lemma}
 \label{lem:4.3}
 Assume for $j=1,2,3$ we have that,
 \begin{equation}
  \label{eq:4.6}
   \Cr{const:4.2}
   < |\vec{b}_\infty^{(j)}|.
 \end{equation}
 Then, $0<|\vec{b}_\infty^{(j)}|-\Cr{const:4.2} \leq
 |\vec{b}^{(j)}(t)|\leq |\vec{b}^{(j)}_\infty|+\Cr{const:4.2},$
 for all $j=1,2,3$, $(\vec{\alpha},\vec{a})\in X_T$, and $0\leq t\leq
 T$. Further if
 \begin{equation}
  \label{eq:4.9}
   2|\vec{a}_0-\vec{a}_\infty|\leq \Cr{const:4.2},
 \end{equation}
 and
 \begin{equation}
  \label{eq:4.7}
   3M_0T\leq \frac12\Cr{const:4.2},
 \end{equation}
 then $|\Psi(\vec{\alpha},\vec{a})(t)-\vec{a}_\infty|\leq\Cr{const:4.2},$
  for all $(\vec{\alpha},\vec{a})\in X_T$ and $0\leq t\leq T$.
\end{lemma}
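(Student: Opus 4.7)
The lemma has two independent parts, both of which reduce to triangle-inequality bookkeeping; I do not expect any genuine obstacle. My plan is as follows.

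For the first assertion, I would exploit the identity $\vec{b}^{(j)}(t)-\vec{b}^{(j)}_\infty=(\vec{x}^{(j)}-\vec{a}(t))-(\vec{x}^{(j)}-\vec{a}_\infty)=\vec{a}_\infty-\vec{a}(t)$, so that for any $(\vec{\alpha},\vec{a})\in X_T$,
\[
 \bigl|\vec{b}^{(j)}(t)-\vec{b}^{(j)}_\infty\bigr|
 =\bigl|\vec{a}(t)-\vec{a}_\infty\bigr|
 \leq \Cr{const:4.2},\qquad 0\le t\le T.
\]
The reverse triangle inequality then gives
$\bigl||\vec{b}^{(j)}(t)|-|\vec{b}^{(j)}_\infty|\bigr|\le \Cr{const:4.2}$, which combined with hypothesis \eqref{eq:4.6} yields
$0<|\vec{b}^{(j)}_\infty|-\Cr{const:4.2}\le|\vec{b}^{(j)}(t)|\le|\vec{b}^{(j)}_\infty|+\Cr{const:4.2}$, as required.

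For the second assertion, I would split
\[
 \Psi(\vec{\alpha},\vec{a})(t)-\vec{a}_\infty
 =(\vec{a}_0-\vec{a}_\infty)
 +\int_0^t\sum_{j=1}^3\Bigl(1+\tfrac12\bigl(\alpha^{(j-1)}(\tau)-\alpha^{(j)}(\tau)\bigr)^2\Bigr)\frac{\vec{b}^{(j)}(\tau)}{|\vec{b}^{(j)}(\tau)|}\,d\tau,
\]
where the denominators are nonzero by the first part. Since each $\vec{b}^{(j)}(\tau)/|\vec{b}^{(j)}(\tau)|$ is a unit vector, and since $|\alpha^{(j)}(\tau)|\le\Cr{const:4.1}$ for $(\vec{\alpha},\vec{a})\in X_T$ implies $(\alpha^{(j-1)}-\alpha^{(j)})^2\le 4\Cr{const:4.1}^2$, the integrand is bounded in norm by $3+6\Cr{const:4.1}^2=3(1+2\Cr{const:4.1}^2)$. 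Applying the triangle inequality pointwise in time gives
\[
 |\Psi(\vec{\alpha},\vec{a})(t)-\vec{a}_\infty|
 \le |\vec{a}_0-\vec{a}_\infty|+3\bigl(1+2\Cr{const:4.1}^2\bigr)T.
\]
Finally, \eqref{eq:4.9} bounds the first term by $\tfrac12\Cr{const:4.2}$ and \eqref{eq:4.7} bounds the second by $\tfrac12\Cr{const:4.2}$, delivering $|\Psi(\vec{\alpha},\vec{a})(t)-\vec{a}_\infty|\le \Cr{const:4.2}$. The only subtlety is ordering the argument so that Part~1 (nondegeneracy of $|\vec{b}^{(j)}(t)|$) is in place before dividing by it in Part~2; otherwise the proof is mechanical.
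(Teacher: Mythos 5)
Your proof is correct and follows essentially the same route as the paper's: triangle-inequality bounds on $|\vec{b}^{(j)}(t)|$ via $\vec{b}^{(j)}(t)-\vec{b}^{(j)}_\infty=\vec{a}_\infty-\vec{a}(t)$, then bounding the integrand of $\Psi$ by $3(1+2\Cr{const:4.1}^2)$ using the unit-vector structure and the bound on $\vec{\alpha}$ in $X_T$. The only cosmetic difference is that you use the reverse triangle inequality once where the paper applies the ordinary triangle inequality in each direction separately.
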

\begin{proof}
 [Proof of Lemma \ref{lem:4.3}]
 For $(\vec{\alpha},\vec{a})\in X_T$, and $0\leq t\leq T$
 \[
 |\vec{b}_\infty^{(j)}|
 =
 |\vec{x}^{(j)}-\vec{a}(t)+\vec{a}(t)-\vec{a}_\infty|
 \leq
 |\vec{b}^{(j)}(t)|+|\vec{a}(t)-\vec{a}_\infty|
 \leq
 |\vec{b}^{(j)}(t)|+\Cr{const:4.2},
 \]
 thus we obtain $0<|\vec{b}_\infty^{(j)}|-\Cr{const:4.2} \leq
 |\vec{b}^{(j)}(t)|$. And  $|\vec{b}^{(j)}(t)|\leq
 |\vec{b}^{(j)}_\infty|+\Cr{const:4.2}$ follows from \eqref{eq:4.16}.
 To show estimate
 $|\Psi(\vec{\alpha},\vec{a})(t)-\vec{a}_\infty|\leq\Cr{const:4.2}$,
 we use the assumption \eqref{eq:4.9} and \eqref{eq:4.7}, to obtain that
 for any $(\vec{\alpha},\vec{a})\in X_T$,
 \[
 \begin{split}
  |\Psi(\vec{\alpha},\vec{a})(t)-\vec{a}_\infty|
  &\leq
  |\vec{a}_0-\vec{a}_\infty|
  +
  \sum_{j=1}^3
  \left|
  \int_0^t
  \sigma(\Delta\alpha^{(j)}(\tau))
  \frac{\vec{b}^{(j)}(\tau)}{|\vec{b}^{(j)}(\tau)|}
  \,d\tau
  \right| \\
  &\leq
  \frac12\Cr{const:4.2}
  +\sum_{j=1}^3
  \sup_{0\leq \tau\leq T}\sigma(\Delta\alpha^{(j)}(\tau))
  T \\
  &\leq
  \frac12\Cr{const:4.2}
  +
  3
  M_0
  T
  \leq
  \Cr{const:4.2},
 \end{split}
 \]
 for all $0\leq t\leq T$.
\end{proof}

The next two Lemmas \ref{lem:4.4} and \ref{lem:4.5} give the Lipschitz property of the map $(\Phi, \Psi)$.

\begin{lemma}[Lipschitz estimates]
 \label{lem:4.4}
 For $(\vec{\alpha}_1,\vec{a}_1)$, $(\vec{\alpha}_2,\vec{a}_2)\in
 X_T$, we have that
 \begin{equation}
  \label{eq:4.8}
  \begin{split}
   &\quad
   \|\Phi(\vec{\alpha}_1,\vec{a}_1)
   -\Phi(\vec{\alpha}_2,\vec{a}_2)\|_{C([0,T])} \\
   &\leq
   4M_2
   (
   |\vec{b}_\infty^{(1)}|
   +
   |\vec{b}_\infty^{(2)}|
   +
   |\vec{b}_\infty^{(3)}|
   +
   3\Cr{const:4.2})T
   \|
   \vec{\alpha}_1-\vec{\alpha}_2
   \|_{C([0,T])}
   +
   6M_1T
   \|\vec{a}_1-\vec{a}_2\|_{C([0,T])}
   .
  \end{split}
 \end{equation}
\end{lemma}

\begin{proof}
 [Proof of Lemma \ref{lem:4.4}]
 For $0\leq t\leq T$, by the Lipschitz continuity of
 $\sigma_\MisOriAngle$ we obtain that
 \[
 \begin{split}
  &\quad
  |\Phi(\vec{\alpha}_1,\vec{a}_1)(t)
  -\Phi(\vec{\alpha}_2,\vec{a}_2)(t)| \\
  &\leq
  \sum_{j=1}^3
  \left|
  \int_0^t
  \left(
  \sigma_\MisOriAngle(\Delta\alpha^{(j+1)}_1)|\vec{b}^{(j+1)}_1|
  -
  \sigma_\MisOriAngle(\Delta\alpha^{(j)}_1)|\vec{b}^{(j)}_1|
  -
  \sigma_\MisOriAngle(\Delta\alpha^{(j+1)}_2)|\vec{b}^{(j+1)}_2|
  +
  \sigma_\MisOriAngle(\Delta\alpha^{(j)}_2)|\vec{b}^{(j)}_2|
  \right)
  \,d\tau
  \right| \\
  &\leq
  \sum_{j=1}^3
  \int_0^t
  \left(
  \left|
  \sigma_\MisOriAngle(\Delta\alpha^{(j+1)}_1)
  \right|
  \left|
  |\vec{b}^{(j+1)}_1|
  -
  |\vec{b}^{(j+1)}_2|
  \right|
  +
  \left|
  \sigma_\MisOriAngle(\Delta\alpha^{(j+1)}_1)
  -
  \sigma_\MisOriAngle(\Delta\alpha^{(j+1)}_2)
  \right|
  |\vec{b}^{(j+1)}_2|
  \right)
  \,d\tau  \\
  &\quad
  +
  \sum_{j=1}^3
  \int_0^t
  \left(
  \left|
  \sigma_\MisOriAngle(\Delta\alpha^{(j)}_1)
  \right|
  \left|
  |\vec{b}^{(j)}_1|
  -
  |\vec{b}^{(j)}_2|
  \right|
  +
  \left|
  \sigma_\MisOriAngle(\Delta\alpha^{(j)}_1)
  -
  \sigma_\MisOriAngle(\Delta\alpha^{(j)}_2)
  \right|
  |\vec{b}^{(j)}_2|
  \right)
  \,d\tau.
\end{split}
 \]
 Next, using  $\vec{b}^{(j)}_k=\vec{x}^{(j)}-\vec{a}_k,$
$\Delta\alpha^{(j)}=\alpha^{(j-1)}-\alpha^{(j)}$ and (\ref{eq:4.16}),
we have,
\[  |\Phi(\vec{\alpha}_1,\vec{a}_1)(t)
  -\Phi(\vec{\alpha}_2,\vec{a}_2)(t)|\leq
  6M_1T
  \|\vec{a}_1-\vec{a}_2\|_{C([0,T])}
  +
  4M_2
  (
  |\vec{b}_\infty^{(1)}|
  +
  |\vec{b}_\infty^{(2)}|
  +
  |\vec{b}_\infty^{(3)}|
  +
  3\Cr{const:4.2})T
  \|
  \vec{\alpha}_1-\vec{\alpha}_2
  \|_{C([0,T])}.
 \]
 Thus, we obtain the inequality \eqref{eq:4.8}.

\end{proof}

\begin{lemma}[Lipschitz estimates]
 \label{lem:4.5}
 Assume condition \eqref{eq:4.6} holds true. Then for
 $(\vec{\alpha}_1,\vec{a}_1)$, $(\vec{\alpha}_2,\vec{a}_2)\in X_T$, we
 have that
 \begin{equation}
  \label{eq:4.11}
  \begin{split}
   &\quad
   \|\Psi(\vec{\alpha}_1,\vec{a}_1)(t)
   -\Psi(\vec{\alpha}_2,\vec{a}_2)(t)\|_{C([0,T])} \\
   &\leq
   6M_1
   T\|\vec{\alpha}_1-\vec{\alpha}_2\|_{C([0,T])}
   +
   2M_0
   \left(
   \frac{1}{|\vec{b}_\infty^{(1)}|-\Cr{const:4.2}}
   +
   \frac{1}{|\vec{b}_\infty^{(2)}|-\Cr{const:4.2}}
   +
   \frac{1}{|\vec{b}_\infty^{(3)}|-\Cr{const:4.2}}
   \right)T
   \|\vec{a}_1-\vec{a}_2\|_{C([0,T])}.
  \end{split}
 \end{equation}
\end{lemma}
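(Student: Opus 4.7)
The plan is to estimate the difference $\Psi(\vec{\alpha}_1,\vec{a}_1)(t)-\Psi(\vec{\alpha}_2,\vec{a}_2)(t)$ by writing it as a sum over $j=1,2,3$ of integrals from $0$ to $t$ of the vector-valued differences
\[
 F_j(\vec{\alpha}_1,\vec{a}_1)(\tau)-F_j(\vec{\alpha}_2,\vec{a}_2)(\tau),\quad
 F_j(\vec{\alpha},\vec{a})(\tau)
 :=\left(1+\tfrac12(\alpha^{(j-1)}-\alpha^{(j)})^2\right)
 \frac{\vec{b}^{(j)}(\tau)}{|\vec{b}^{(j)}(\tau)|},
\]
and then bounding each integrand in $C([0,T])$ before pulling out the factor of $T$ from the time integral.

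The key step is an add-and-subtract decomposition of $F_j$ into the sum of two pieces: one in which only the scalar prefactor changes while the direction $\hat{\vec{b}}_1^{(j)}$ stays fixed, and another in which the direction changes while the scalar prefactor $1+\tfrac12(\alpha_2^{(j-1)}-\alpha_2^{(j)})^2$ is held fixed. For the first piece I would use the identity $(a^2-b^2)=(a-b)(a+b)$ with $a=\alpha_1^{(j-1)}-\alpha_1^{(j)}$ and $b=\alpha_2^{(j-1)}-\alpha_2^{(j)}$; the definition of $X_T$ gives $|a|,|b|\le 2\Cr{const:4.1}$, so $|a+b|\le 4\Cr{const:4.1}$, while $|a-b|\le 2\|\vec{\alpha}_1-\vec{\alpha}_2\|_{C([0,T])}$. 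Together with $|\hat{\vec{b}}_1^{(j)}|=1$, this contributes $\tfrac12\cdot 4\Cr{const:4.1}\cdot 2=4\Cr{const:4.1}$ per index $j$, summing to $12\Cr{const:4.1}$ and yielding the first term on the right-hand side of \eqref{eq:4.11} after multiplying by $T$.

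For the second piece, I would bound the scalar prefactor uniformly by $1+2\Cr{const:4.1}^2$ (again using $|\alpha_2^{(j-1)}-\alpha_2^{(j)}|\le 2\Cr{const:4.1}$) and apply the standard Lipschitz estimate for the normalization map,
\[
 \left|\frac{\vec{u}}{|\vec{u}|}-\frac{\vec{v}}{|\vec{v}|}\right|
 \le\frac{2|\vec{u}-\vec{v}|}{\min(|\vec{u}|,|\vec{v}|)},
\]
applied to $\vec{u}=\vec{b}_1^{(j)}(\tau)$ and $\vec{v}=\vec{b}_2^{(j)}(\tau)$. The constraint $\vec{a}_k(\tau)+\vec{b}_k^{(j)}(\tau)=\vec{x}^{(j)}$ gives $\vec{b}_1^{(j)}-\vec{b}_2^{(j)}=\vec{a}_2-\vec{a}_1$, while Lemma \ref{lem:4.3} (which is available precisely because of the standing assumption \eqref{eq:4.6}) provides the positive lower bound $|\vec{b}_k^{(j)}(\tau)|\ge|\vec{b}_\infty^{(j)}|-\Cr{const:4.2}$. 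This produces the second term on the right-hand side of \eqref{eq:4.11}.

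The main obstacle, such as it is, is keeping track of the constants cleanly: the factor $12$ in front of $\Cr{const:4.1}$ comes from the product $\tfrac12\cdot 4\cdot 2\cdot 3$ over the three curves, and the factor $2$ in front of $(1+2\Cr{const:4.1}^2)$ comes from the normalization Lipschitz bound. There are no other subtleties: everything is pointwise in $\tau$, taking supremum over $\tau\in[0,T]$ and then using $\int_0^t\le T$ for $t\le T$ closes the estimate.
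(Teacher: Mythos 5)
Your proposal is correct and follows essentially the same route as the paper: the same add-and-subtract decomposition into a scalar-prefactor difference times $\hat{\vec{b}}_1^{(j)}$ plus $\sigma_2^{(j)}$ times a direction difference, the same $(a-b)(a+b)$ factorization with the $X_T$ bounds giving the factor $12\Cr{const:4.1}$, and the same Lipschitz estimate for the normalization map (the paper's \eqref{eq:4.12}) combined with Lemma \ref{lem:4.3} and $\vec{b}_1^{(j)}-\vec{b}_2^{(j)}=\vec{a}_2-\vec{a}_1$ for the second term. The constants match the paper's exactly.
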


\begin{proof}
 [Proof of Lemma \ref{lem:4.5}]
 For $k=1,2$, denote
 $\sigma^{(j)}_k(t):=
 \sigma(\Delta\alpha^{(j)}_k(t))
 $. For
 $0\leq t\leq T$, we can obtain the following estimate
 \[
 \begin{split}
  |\Psi(\vec{\alpha}_1,\vec{a}_1)(t)
  -\Psi(\vec{\alpha}_2,\vec{a}_2)(t)|
  &=
  \left|
  \sum_{j=1}^3
  \int_0^t
  \left(
  \sigma^{(j)}_1(\tau)\frac{\vec{b}_1^{(j)}(\tau)}{|\vec{b}_1^{(j)}(\tau)|}
  -
  \sigma^{(j)}_2(\tau)\frac{\vec{b}_2^{(j)}(\tau)}{|\vec{b}_2^{(j)}(\tau)|}
  \right)
  \,d\tau
  \right| \\
  &\leq
  \sum_{j=1}^3
  \int_0^T
  \left|
  \sigma^{(j)}_1(\tau)\frac{\vec{b}_1^{(j)}(\tau)}{|\vec{b}_1^{(j)}(\tau)|}
  -
  \sigma^{(j)}_2(\tau)\frac{\vec{b}_2^{(j)}(\tau)}{|\vec{b}_2^{(j)}(\tau)|}
  \right|
  \,d\tau \\
  &\leq
  \sum_{j=1}^3
  \int_0^T
  \left|
  \sigma^{(j)}_1(\tau)-\sigma^{(j)}_2(\tau)
  \right|
  \,d\tau \\
  &\quad
  +
  \sum_{j=1}^3
  \int_0^T
  \sigma^{(j)}_2(\tau)
  \left|
  \frac{\vec{b}_1^{(j)}(\tau)}{|\vec{b}_1^{(j)}(\tau)|}
  -
  \frac{\vec{b}_2^{(j)}(\tau)}{|\vec{b}_2^{(j)}(\tau)|}
  \right|
  \,d\tau.
 \end{split}
 \]
 Since $(\vec{\alpha}_k,\vec{a}_k)\in X_T$,  we have
 \[
 \begin{split}
  \left|
  \sigma^{(j)}_1(\tau)-\sigma^{(j)}_2(\tau)
  \right|
  &=
  |
  \sigma(\Delta\alpha_1^{(j)}(\tau))
  -
  \sigma(\Delta\alpha_2^{(j)}(\tau))
  | \\
  &\leq
  M_1
  |
  \Delta\alpha_1^{(j)}(\tau)
  -
  \Delta\alpha_2^{(j)}(\tau)
  | \\
  &\leq
  2 M_1
  \|
  \vec{\alpha}_1
  -
  \vec{\alpha}_2
  \|_{C([0,T])}.
 \end{split}
 \]
 Hence, we derive that
 \[
 \sum_{j=1}^3
 \int_0^T
  \left|
 \sigma^{(j)}_1(\tau)-\sigma^{(j)}_2(\tau)
 \right|
 \,d\tau
 \leq
 6M_1T
 \|\vec{\alpha}_1-\vec{\alpha}_2\|_{C([0,T])}.
 \]

 Next, due to condition \eqref{eq:4.6}, we can apply Lemma
 \ref{lem:4.3}. Therefore, we have that
 $|\vec{b}_k^{(j)}(\tau)|\neq0$ for $j=1,2,3$, $k=1,2$, and $0\leq
 \tau\leq T$. By direct
 calculations, we have that
 \begin{equation}
  \label{eq:4.12}
 \begin{split}
   \left|
   \frac{\vec{b}_1^{(j)}(\tau)}{|\vec{b}_1^{(j)}(\tau)|}
   -
   \frac{\vec{b}_2^{(j)}(\tau)}{|\vec{b}_2^{(j)}(\tau)|}
   \right|
   &=
   \frac{1}{|\vec{b}_1^{(j)}(\tau)|}
   \left|
   \vec{b}_1^{(j)}(\tau)-
   \frac{|\vec{b}_1^{(j)}(\tau)|}{|\vec{b}_2^{(j)}(\tau)|} \vec{b}_2^{(j)}(\tau)
   \right| \\
   &\leq
   \frac{1}{|\vec{b}_1^{(j)}(\tau)|}
   \left(
   \left|
   \vec{b}_1^{(j)}(\tau)-\vec{b}_2^{(j)}(\tau)
   \right|
   +
   \left|
   \left(
   1-\frac{|\vec{b}_1^{(j)}(\tau)|}{|\vec{b}_2^{(j)}(\tau)|}
   \right) \vec{b}_2^{(j)}(\tau)
   \right|
   \right) \\
   &\leq
   \frac{1}{|\vec{b}_1^{(j)}(\tau)|}
   \left(
   \left|
   \vec{b}_1^{(j)}(\tau)-\vec{b}_2^{(j)}(\tau)
   \right|
   +
   \left|
   |\vec{b}_2^{(j)}(\tau)|
   -
   |\vec{b}_1^{(j)}(\tau)|
   \right|
   \right) \\
   &\leq
   \frac{2}{|\vec{b}_1^{(j)}(\tau)|}
   \left|
   \vec{b}_1^{(j)}(\tau)-\vec{b}_2^{(j)}(\tau)
   \right|.
 \end{split}
 \end{equation}
 Again, using Lemma \ref{lem:4.3}, and due to uniqueness of the point $\vec{a}_{\infty}$ (see Section \ref{sec:3}), we have
 that $0<|\vec{b}_\infty^{(j)}|-\Cr{const:4.2} \leq
 |\vec{b}_1^{(j)}(\tau)|$ for $j=1,2,3$, and $0\leq \tau\leq T$. Thus,
 we derive that
 \[
 \left|
 \frac{\vec{b}_1^{(j)}(\tau)}{|\vec{b}_1^{(j)}(\tau)|}
 -
 \frac{\vec{b}_2^{(j)}(\tau)}{|\vec{b}_2^{(j)}(\tau)|}
 \right|
 \leq \frac{2}{|\vec{b}_\infty^{(j)}|-\Cr{const:4.2}}
 \|\vec{a}_1-\vec{a}_2\|_{C([0,T])},
 \]
 and,
\begin{equation*}
  \begin{split}
   &\quad\sum_{j=1}^3
   \int_0^T
   \sigma^{(j)}_2(\tau)
   \left|
   \frac{\vec{b}_1^{(j)}(\tau)}{|\vec{b}_1^{(j)}(\tau)|}
   -
   \frac{\vec{b}_2^{(j)}(\tau)}{|\vec{b}_2^{(j)}(\tau)|}
   \right|
   \,d\tau \\
   &\leq
   \sum_{j=1}^3
   \int_0^T
   \frac{2M_0}{|\vec{b}_\infty^{(j)}|-\Cr{const:4.2}}
   \|\vec{a}_1-\vec{a}_2\|_{C([0,T])}
   \,d\tau \\
   &\leq
   2M_0
   \left(
   \frac{1}{|\vec{b}_\infty^{(1)}|-\Cr{const:4.2}}
   +
   \frac{1}{|\vec{b}_\infty^{(2)}|-\Cr{const:4.2}}
   +
   \frac{1}{|\vec{b}_\infty^{(3)}|-\Cr{const:4.2}}
   \right)T
   \|\vec{a}_1-\vec{a}_2\|_{C([0,T])}.
  \end{split}
\end{equation*}
Hence, we obtain the desired estimate,
 \[
 \begin{split}
  &\quad
  |\Psi(\vec{\alpha}_1,\vec{a}_1)(t)
  -\Psi(\vec{\alpha}_2,\vec{a}_2)(t)| \\
  &\leq
  6M_1
  T\|\vec{\alpha}_1-\vec{\alpha}_2\|_{C([0,T])}
  +
  2M_0
  \left(
  \frac{1}{|\vec{b}_\infty^{(1)}|-\Cr{const:4.2}}
  +
  \frac{1}{|\vec{b}_\infty^{(2)}|-\Cr{const:4.2}}
  +
  \frac{1}{|\vec{b}_\infty^{(3)}|-\Cr{const:4.2}}
  \right)T
  \|\vec{a}_1-\vec{a}_2\|_{C([0,T])}.
 \end{split}
 \]
\end{proof}
\begin{proof}
 [Proof of Theorem \ref{thm:4.1}]
 We start with given constants $\Cr{const:4.1}$ and $\Cr{const:4.2}$ for, $\Cr{const:4.1}:=2|\vec{\alpha}_0|$ and
 $\Cr{const:4.2}:=2|\vec{a}_0-\vec{a}_\infty|$.  Note,  that due to
 assumption \eqref{eq:4.10}, we obtain that
 $\Cr{const:4.2}<|\vec{b}_\infty^{(j)}|$ for all $j=1,2,3$, and hence,
 we have that,
 \[
 |\vec{b}_\infty^{(1)}|+|\vec{b}_\infty^{(2)}|+|\vec{b}_\infty^{(3)}|
 +3\Cr{const:4.2}
 \leq
 2(|\vec{b}_\infty^{(1)}|+|\vec{b}_\infty^{(2)}|+|\vec{b}_\infty^{(3)}|).
 \]
Next, we will find the bound for the existence time
   $T$ which will guarantee the contraction mapping on $X_T$. Take
   time $T>0$ as defined below,
\begin{equation}\label{timeest}
 T:=
  \min
  \left\{
   \frac{\Cr{const:4.1}}{8(M_1+4M_2\Cr{const:4.1})\sum_{j=1}^3|\vec{b}^{(j)}_\infty|},\
   \frac{\Cr{const:4.2}}{6M_0}
  ,\
  \frac{1}{12M_1},\
  \frac{1}{8M_0\sum_{j=1}^3\frac{1}{|\vec{b}_\infty^{(j)}|-\Cr{const:4.2}}}
 \right\}.
\end{equation}
Recall, that the space $X_T$ (see Section \ref{sec:4}) is a complete metric space endowed with a distance
 \[
 d_{X_T}((\vec{\alpha}_1,\vec{a}_1),(\vec{\alpha}_2,\vec{a}_2))
 =\|\vec{\alpha}_1-\vec{\alpha}_2\|_{C([0,T])}
 +\|\vec{a}_1-\vec{a}_2\|_{C([0,T])}.
 \]
In addition, definition of constants $\Cr{const:4.1}$ and $\Cr{const:4.2}$ above
implies condition
 \eqref{eq:4.4}, \eqref{eq:4.6}, and \eqref{eq:4.9} in Lemmas
 \ref{lem:4.2}-\ref{lem:4.3}.
 Moreover, since we selected $T$, as
 \[
 T\leq
 \frac{\Cr{const:4.1}}{8(M_1+4M_2\Cr{const:4.1})\sum_{j=1}^3|\vec{b}^{(j)}_\infty|}
 \ \text{and}\
 T
 \leq
 \frac{\Cr{const:4.2}}{6M_0},
 \]
 we also have that,
 \[
 (2M_1+4M_2\Cr{const:4.1})(|\vec{b}^{(1)}_\infty|
 +|\vec{b}^{(2)}_\infty|+|\vec{b}^{(3)}_\infty|+3\Cr{const:4.2})T
 \leq
 4(M_1+4M_2\Cr{const:4.1})(|\vec{b}^{(1)}_\infty|
 +|\vec{b}^{(2)}_\infty|+|\vec{b}^{(3)}_\infty|)T
 \leq
 \frac12{\Cr{const:4.1}},
 \]
 and
 \[
 3M_0T\leq \frac12\Cr{const:4.2}
 \]
 Thus, the other conditions \eqref{eq:4.5} and \eqref{eq:4.7}  in Lemmas
 \ref{lem:4.2}-\ref{lem:4.3} are also satisfied. Therefore, we can employ
 Lemmas \ref{lem:4.2} and \ref{lem:4.3} to show that the mapping
 \[
 X_T\ni(\vec{\alpha},\vec{a})\mapsto
 (\Phi(\vec{\alpha},\vec{a}),\Psi(\vec{\alpha},\vec{a}))\in X_T
 \]
 is well-defined. Next,  combining estimates \eqref{eq:4.8} and
 \eqref{eq:4.11} in Lemmas \ref{lem:4.4}-\ref{lem:4.5} together, we obtain that,
 \begin{equation}
  \label{eq:4.13d}
 \begin{split}
  &\quad
  d_X
  ((\Phi(\vec{\alpha}_1,\vec{a}_1), \Psi(\vec{\alpha}_1,\vec{a}_1)),
  (\Phi(\vec{\alpha}_2,\vec{a}_2), \Psi(\vec{\alpha}_2,\vec{a}_2))
  ) \\
  &\leq
  \left(
  6M_1+ 8M_2(|\vec{b}^{(1)}_\infty|
  +|\vec{b}^{(2)}_\infty|+|\vec{b}^{(3)}_\infty|)
  \right)
  T
  \|\vec{\alpha}_1-\vec{\alpha}_2\|_{C([0,T])} \\
  &\quad
  +
  \left(
  6M_1
  +
  2M_0
  \left(
  \frac{1}{|\vec{b}_\infty^{(1)}|-\Cr{const:4.2}}
  +
  \frac{1}{|\vec{b}_\infty^{(2)}|-\Cr{const:4.2}}
  +
  \frac{1}{|\vec{b}_\infty^{(3)}|-\Cr{const:4.2}}
  \right)
  \right)
  T\|\vec{a}_1-\vec{a}_2\|_{C([0,T])} \\
 \end{split}
 \end{equation}
 for $(\vec{\alpha}_1,\vec{a}_1)$, $(\vec{\alpha}_2,\vec{a}_2)\in X_T$.
 Next, since we selected time $T$ as in (\ref{timeest})
 we have that,
 \begin{equation}
  \label{eq:4.14t}
   T\leq
   \frac{\Cr{const:4.1}}{8(M_1+4M_2\Cr{const:4.1})\sum_{j=1}^3|\vec{b}^{(j)}_\infty|}
  \leq
  \frac{1}{32M_2\sum_{j=1}^3|\vec{b}^{(j)}_\infty|}
  ,\quad
  T\leq
  \frac{1}{12M_1},
 \end{equation}
 and,
\begin{equation}\label{eq:4.15t}
 T
  \leq
  \left(
   8M_0
   \sum_{j=1}^3\frac{1}{|\vec{b}_\infty^{(j)}|-\Cr{const:4.2}}
  \right)^{-1}.
 \end{equation}
 Using the above estimates on time $T$, (\ref{eq:4.14t})-(\ref{eq:4.15t}) in (\ref{eq:4.13d}) we obtain
 that,
 \begin{equation*}
  d_X
  ((\Phi(\vec{\alpha}_1,\vec{a}_1), \Psi(\vec{\alpha}_1,\vec{a}_1)),
  (\Phi(\vec{\alpha}_2,\vec{a}_2), \Psi(\vec{\alpha}_2,\vec{a}_2))
  \leq
  \frac{3}{4}
  d_X
  ((\vec{\alpha}_1,\vec{a}_1),
  (\vec{\alpha}_2,\vec{a}_2)).
 \end{equation*}
 Therefore, by the contraction mapping principle, there is a fixed
 point
 $(\vec{\alpha},\vec{a})\in X_T$, such that
 \[
 \vec{\alpha}=\Phi(\vec{\alpha},\vec{a}),\quad
 \vec{a}=\Psi(\vec{\alpha},\vec{a}),
 \]
 which is a solution of the system of differential
 equations\eqref{eq:4.2}.

Moreover,  we obtain the following estimates:
 \begin{equation}
  \label{eq:4.23}
  \begin{split}
   \|\vec{\alpha}\|_{C([0,T])}
   &\leq
   2|\vec{\alpha}_0|,
   \quad
   \|\vec{a}-\vec{a}_\infty\|_{C([0,T])}
   \leq
   2|\vec{a}-\vec{a}_\infty|, \\
   T_{\text{max}}
   &\geq
   \min
  \left\{
   \frac{|\vec{\alpha}_0|}{4(M_1+8M_2|\vec{\alpha}_0|)\sum_{j=1}^3|\vec{b}^{(j)}_\infty|},\
   \frac{|\vec{a}_0-\vec{a}_\infty|}{3M_0}
  ,\
  \frac{1}{12M_1},\
  \frac{1}{8M_0\sum_{j=1}^3\frac{1}{|\vec{b}_\infty^{(j)}|-2|\vec{a}_0-\vec{a}_\infty|}}
 \right\},
  \end{split}
 \end{equation}
 where $T_{\text{max}}$ is a maximal existence time of the solution
 $(\vec{\alpha},\vec{a})$. 
\end{proof}

\begin{remark}\label{rem:4.6}
 Note, that once some a priori estimates for
 $\|\vec{\alpha}\|_{C([0,T])}$ and
 $\|\vec{a}-\vec{a}_\infty\|_{C([0,T])}$ are deduced, a global solution
 of \eqref{eq:4.2} can be obtained using the estimate of a maximal
 existence time  $T_{\text{max}}$.
\end{remark}

\section{A priori estimates}\label{sec:5}
We first derive the energy dissipation principle for the system \eqref{eq:4.2}. The
system does not depend on parametrization $s$, hence the energy of the system
\eqref{eq:4.2} is given by
\begin{equation}
  E(t)=
  \sum_{j=1}^{3}
  \sigma(\Delta\alpha^{(j)}(t))
  |\vec{b}^{(j)}(t)|.
\end{equation}

\begin{proposition}
 [Energy dissipation]
 \label{prop:5.1}
 Let $(\vec{\alpha},\vec{a})$ be a solution of \eqref{eq:4.2} for
 $0\leq t\leq T$. Then, for all $0<t \leq T$, we have the local
 dissipation equality,
 \begin{equation}
  \label{eq:5.1}
  E(t)
  +\int_0^t
  \left|
  \frac{d\vec{\alpha}}{dt}(\tau)
  \right|^2
  \,d\tau
  +\int_0^t
  \left|
  \frac{d\vec{a}}{dt}(\tau)
  \right|^2
  \,d\tau \\
  =
  E(0).
 \end{equation}
\end{proposition}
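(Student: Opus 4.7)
The plan is to prove Proposition \ref{prop:5.1} by direct time differentiation of $E(t)$, using the reduced ODE system \eqref{eq:4.2} at each step to recognize the two dissipation terms. Since the reduction to line segments has eliminated any $s$-dependence in the integrand of \eqref{eq:1}, the energy is simply
\[
E(t)=\sum_{j=1}^3 \sigma^{(j)}(t)\,|\vec{b}^{(j)}(t)|,\qquad
\sigma^{(j)}=1+\tfrac12(\alpha^{(j-1)}-\alpha^{(j)})^2,
\]
and I would first compute
\[
\frac{d}{dt}E(t)
=\sum_{j=1}^3\sigma^{(j)}_\alpha\,\frac{d(\Delta\alpha^{(j)})}{dt}\,|\vec{b}^{(j)}|
+\sum_{j=1}^3\sigma^{(j)}\,\frac{d|\vec{b}^{(j)}|}{dt}.
\]

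For the second sum I would use the constraint $\vec{b}^{(j)}(t)=\vec{x}^{(j)}-\vec{a}(t)$, which gives $\frac{d|\vec{b}^{(j)}|}{dt}=-\hat{\vec{b}}^{(j)}\cdot\frac{d\vec{a}}{dt}$. Factoring out $\frac{d\vec{a}}{dt}$ and invoking the triple-junction drag ODE in \eqref{eq:4.2}, namely
\[
\frac{d\vec{a}}{dt}=\sum_{j=1}^3\sigma^{(j)}\hat{\vec{b}}^{(j)},
\]
converts that sum into $-\bigl|\frac{d\vec{a}}{dt}\bigr|^2$.

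For the first sum (the misorientation piece) I would perform the same index shift used in the derivation of \eqref{eq:2.6}: expand $\frac{d(\Delta\alpha^{(j)})}{dt}=\frac{d\alpha^{(j-1)}}{dt}-\frac{d\alpha^{(j)}}{dt}$, relabel $j\mapsto j+1$ in the $\alpha^{(j-1)}$-piece (using $\sigma^{(4)}=\sigma^{(1)}$, $\alpha^{(0)}=\alpha^{(3)}$), and collect to obtain
\[
\sum_{j=1}^3\sigma^{(j)}_\alpha\,\frac{d(\Delta\alpha^{(j)})}{dt}\,|\vec{b}^{(j)}|
=\sum_{j=1}^3\Bigl(\sigma^{(j+1)}_\alpha|\vec{b}^{(j+1)}|-\sigma^{(j)}_\alpha|\vec{b}^{(j)}|\Bigr)\frac{d\alpha^{(j)}}{dt}.
\]
Now, since $\sigma^{(j)}_\alpha=\alpha^{(j-1)}-\alpha^{(j)}$ and the ODE for $\vec{\alpha}$ in \eqref{eq:4.2} is exactly
\[
\frac{d\alpha^{(j)}}{dt}=\sigma^{(j)}_\alpha|\vec{b}^{(j)}|-\sigma^{(j+1)}_\alpha|\vec{b}^{(j+1)}|,
\]
this bracket equals $-\frac{d\alpha^{(j)}}{dt}$, so the sum collapses to $-\sum_{j=1}^3\bigl|\frac{d\alpha^{(j)}}{dt}\bigr|^2=-\bigl|\frac{d\vec{\alpha}}{dt}\bigr|^2$.

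Combining the two pieces yields the pointwise dissipation identity
\[
\frac{d}{dt}E(t)=-\Bigl|\frac{d\vec{\alpha}}{dt}(t)\Bigr|^2-\Bigl|\frac{d\vec{a}}{dt}(t)\Bigr|^2,
\]
and integrating from $0$ to $t$ produces \eqref{eq:5.1}. The computation is essentially already carried out in Section~\ref{sec:2}; the only subtle step—and the one I would write out carefully—is the cyclic relabeling that matches the $\sigma$-bracket with the $\vec{\alpha}$-ODE, since all other manipulations are smooth differentiation justified by the $C^1$ regularity of $(\vec{\alpha},\vec{a})$ guaranteed by Theorem~\ref{thm:4.1}. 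No additional estimates are needed: $|\vec{b}^{(j)}(t)|>0$ on $[0,T]$ from Lemma~\ref{lem:4.3} so $\hat{\vec{b}}^{(j)}$ is well defined throughout, and every quantity that appears is bounded on $[0,T]$.
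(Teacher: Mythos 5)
Your proposal is correct and follows essentially the same route as the paper's proof: differentiate $E(t)=\sum_j\sigma^{(j)}|\vec{b}^{(j)}|$, use the cyclic relabeling to match the misorientation term with the $\vec{\alpha}$-ODE (yielding $-|d\vec{\alpha}/dt|^2$), and use $d\vec{b}^{(j)}/dt=-d\vec{a}/dt$ together with the triple-junction ODE for the remaining term (yielding $-|d\vec{a}/dt|^2$), then integrate. No gaps.
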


\begin{proof}
 [Proof of Proposition \ref{prop:5.1}]
 Let us first compute the rate of the dissipation of the energy of the
 system \eqref{eq:4.2} at time $t$,
 \begin{equation}
  \label{eq:5.2}
    \frac{d}{dt}
    E(t)
    =
    \sum_{j=1}^3
    \sigma_\MisOriAngle
    (\Delta\alpha^{(j)})
    \left(
    \frac{d\alpha^{(j-1)}}{dt}-\frac{d\alpha^{(j)}}{dt}
    \right)
    |\vec{b}^{(j)}| 
    +
    \sum_{j=1}^3
    \sigma
    (\Delta\alpha^{(j)})
    \frac{\vec{b}^{(j)}}{|\vec{b}^{(j)}|}
    \cdot
    \frac{d\vec{b}^{(j)}}{dt}.
 \end{equation}
 Since $(\vec{\alpha},\vec{a})$ is a solution of  the system \eqref{eq:4.2}, the
 right hand side of \eqref{eq:5.2} can be calculated as,
 \[
 \begin{split}
  \sum_{j=1}^3
  \sigma_\MisOriAngle(\Delta\alpha^{(j)})
  \left(
  \frac{d\alpha^{(j-1)}}{dt}-\frac{d\alpha^{(j)}}{dt}
  \right)
  |\vec{b}^{(j)}|
  &=
  \sum_{j=1}^3
  \Bigl(
  \sigma_\MisOriAngle(\Delta\alpha^{(j+1)})|\vec{b}^{(j+1)}|
  -
  \sigma_\MisOriAngle(\Delta\alpha^{(j)})|\vec{b}^{(j)}|
  \Bigr)
  \frac{d\alpha^{(j)}}{dt} \\
  &=
  -
  \sum_{j=1}^3
  \left|
  \frac{d\alpha^{(j)}}{dt}
  \right|^2,
 \end{split}
 \]
 and
 \[
 \sum_{j=1}^3
 \sigma(\Delta\alpha^{(j)})
 \frac{\vec{b}^{(j)}}{|\vec{b}^{(j)}|}
 \cdot
 \frac{d\vec{b}^{(j)}}{dt}
 =
 -
 \left|
 \frac{d\vec{a}}{dt}
 \right|^2.
 \]
Thus, we obtain the energy dissipation for the system,
\begin{equation}
  \label{eq:5.2a}
    \frac{d}{dt} E(t) = -
  \left|
  \frac{d\vec{\alpha}}{dt}
  \right|^2 -
 \left|
 \frac{d\vec{a}}{dt}
 \right|^2.
 \end{equation}
 Next, integrating \eqref{eq:5.2a} with respect to $t$, we have the local dissipation equality
 \eqref{eq:5.1}.
\end{proof}

From the energy dissipation and the assumption
\eqref{eq:2.Assumption1}, we obtain,
\begin{corollary}
 Let $(\vec{\alpha},\vec{a})$ be a solution of \eqref{eq:4.2} for
 $0\leq t\leq T$. Then, for all $0<t \leq T$,
 \begin{equation}
  \label{eq:5.6}
   |\vec{b}^{(j)}(t)|
   \leq
   \frac{1}{\Cr{const:2.Assumption1}}E(0).
 \end{equation}
\end{corollary}

\begin{proposition}
 [Maximum principle]
 \label{prop:5.2}
 Let $(\vec{\alpha},\vec{a})$ be a solution of the system \eqref{eq:4.2}  for
 $0\leq t\leq T$. Then, for all $0<t \leq T$, we have,
 \begin{equation}
  \label{eq:5.3}
   |\vec{\alpha}(t)|^2
   \leq
   |\vec{\alpha}_0|^2.
 \end{equation}
\end{proposition}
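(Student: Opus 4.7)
The plan is to use the structural property of the matrix $\mathbb{B}(t)$ that was already established in Lemma \ref{lem:3.1}: for each $t$, the matrix $\mathbb{B}(t)$ has the form of $\mathcal{C}$ in \eqref{eq:3.2} with coefficients $c_j = |\vec{b}^{(j)}(t)| \geq 0$, so $\mathbb{B}(t)$ is symmetric and non-negative definite. This makes the ODE $\frac{d\vec{\alpha}}{dt} = -\mathbb{B}(t)\vec{\alpha}$ dissipative in the $\ell^2$ sense, which is exactly the maximum principle we want.

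Concretely, the first step is to compute
\[
\frac{d}{dt}|\vec{\alpha}(t)|^2 = 2\vec{\alpha}(t)\cdot \frac{d\vec{\alpha}}{dt}(t) = -2\,\vec{\alpha}(t)\cdot \mathbb{B}(t)\vec{\alpha}(t),
\]
where the first equality uses that $|\cdot|^2$ is differentiable and the second uses the first equation of \eqref{eq:4.2}. The second step is to invoke Lemma \ref{lem:3.1} with $c_j = |\vec{b}^{(j)}(t)|\geq 0$, which guarantees $\mathbb{B}(t)$ is non-negative definite, so that $\vec{\alpha}(t)\cdot \mathbb{B}(t)\vec{\alpha}(t)\geq 0$ for every $t\in[0,T]$. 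Hence $\frac{d}{dt}|\vec{\alpha}(t)|^2\leq 0$.

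The third step is to integrate this inequality on $[0,t]$ for any $0<t\leq T$ to conclude $|\vec{\alpha}(t)|^2 \leq |\vec{\alpha}_0|^2$, which is \eqref{eq:5.3}.

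There is essentially no obstacle here: the only thing that needs to be checked is that $\mathbb{B}(t)$ indeed matches the form of $\mathcal{C}$ with non-negative $c_j$, which is immediate from the definition in \eqref{eq:4.2} and the fact that $|\vec{b}^{(j)}(t)|$ is a norm. Alternatively, one could note that this estimate is already implicit in the energy dissipation identity \eqref{eq:5.1} of Proposition \ref{prop:5.1}, since the $-\int_0^t|\frac{d\vec{\alpha}}{dt}|^2\,d\tau$ term comes precisely from the non-negative quadratic form $\vec{\alpha}\cdot\mathbb{B}\vec{\alpha}$; however the direct argument above is cleaner and does not require combining with the triple-junction dynamics.
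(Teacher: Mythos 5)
Your proof is correct and follows essentially the same route as the paper: take the inner product of the $\vec{\alpha}$-equation with $\vec{\alpha}$, use Lemma \ref{lem:3.1} to see that $\mathbb{B}(t)\vec{\alpha}\cdot\vec{\alpha}\geq 0$, and integrate. The only difference is cosmetic — you drop the non-negative term at the level of the differential inequality while the paper drops it after integrating — so the arguments are the same.
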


\begin{proof}
 [Proof of Proposition \ref{prop:5.2}]
 Multiplying the first equation of
 \eqref{eq:2.5} by  $\alpha^{(j)}$ and taking the sum for $j=1,2,3$, we obtain,
 \begin{equation}
  \begin{split}
   \frac{1}{2}\frac{d}{dt}|\vec{\alpha}(t)|^2
   &=
   -
   \sum_{j=1}^3
   \left(
   \sigma_\MisOriAngle(\Delta\alpha^{(j+1)})|\vec{b}^{(j+1)}|
   -
   \sigma_\MisOriAngle(\Delta\alpha^{(j)})|\vec{b}^{(j)}|
   \right)
   \alpha^{(j)} \\
   &=
   -
   \sum_{j=1}^3
   \left(
   \sigma_\MisOriAngle(\Delta\alpha^{(j)})|\vec{b}^{(j)}|
   \right)
   (\alpha^{(j-1)}-\alpha^{(j)})
   =
   -
   \sum_{j=1}^3
   \left(
   \sigma_\MisOriAngle(\Delta\alpha^{(j)})|\vec{b}^{(j)}|
   \right)
   \Delta\alpha^{(j)}.
  \end{split}
 \end{equation}
 Next, integrating with respect to $t$, and using the assumption
 \eqref{eq:2.Assumption2}, we obtain the result \eqref{eq:5.3}.
\end{proof}
\begin{proposition}
 [Misorientation estimates]
 \label{prop:5.3}
 Let $(\vec{\alpha},\vec{a})$ be a solution of \eqref{eq:4.2} for
 $0\leq t\leq T$. Then, for all $0<t\leq T$, we have the following
 estimate for the misorientation,
 \begin{equation}
  \label{eq:5.4}
   \sum_{j=1}^3
    (\Delta\alpha^{(j)}(t))^2
   \leq
   \sum_{j=1}^3
   (\Delta\alpha^{(j)}(0))^2.
 \end{equation}
\end{proposition}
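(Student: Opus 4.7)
The plan is to mirror the energy-style argument of Proposition \ref{prop:5.2} but applied to the misorientation vector
\[
\vec{\Delta}(t) := (\mathbb{A}-\mathbb{I})\vec{\alpha}(t),\qquad
\Delta^{(j)}(t) = \alpha^{(j-1)}(t)-\alpha^{(j)}(t),
\]
with $\alpha^{(0)} = \alpha^{(3)}$ as before. Since $\mathbb{A}-\mathbb{I}$ is a constant matrix, applying it to the first equation of \eqref{eq:4.2} (equivalently, subtracting consecutive scalar ODEs for the $\alpha^{(j)}$) would yield a closed first-order linear system for $\vec{\Delta}$ itself:
\[
\frac{d\Delta^{(j)}}{dt} = |\vec{b}^{(j-1)}(t)|\Delta^{(j-1)} - 2|\vec{b}^{(j)}(t)|\Delta^{(j)} + |\vec{b}^{(j+1)}(t)|\Delta^{(j+1)},\qquad j=1,2,3,
\]
with indices cyclic. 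By definition $\vec{\Delta}$ satisfies the triangle identity $\Delta^{(1)}+\Delta^{(2)}+\Delta^{(3)}=0$ for all $t$; this is the algebraic ingredient that makes the argument close.

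Next I would test this system against $\vec{\Delta}$ itself to obtain, after a routine reindexing of the cross sums,
\[
\frac{1}{2}\frac{d}{dt}|\vec{\Delta}(t)|^2
= -2\sum_{j=1}^3|\vec{b}^{(j)}(t)|(\Delta^{(j)})^2
+ \sum_{k=1}^3\bigl(|\vec{b}^{(k)}(t)|+|\vec{b}^{(k+1)}(t)|\bigr)\Delta^{(k)}\Delta^{(k+1)}.
\]
The pivotal step is then an algebraic identity enabled by the triangle constraint: using $\Delta^{(j-1)}+\Delta^{(j+1)} = -\Delta^{(j)}$, regrouping yields
\[
\sum_{k=1}^3(|\vec{b}^{(k)}|+|\vec{b}^{(k+1)}|)\Delta^{(k)}\Delta^{(k+1)}
= \sum_{j=1}^3 |\vec{b}^{(j)}|\Delta^{(j)}\bigl(\Delta^{(j-1)}+\Delta^{(j+1)}\bigr)
= -\sum_{j=1}^3|\vec{b}^{(j)}|(\Delta^{(j)})^2.
\]
Substituting this back collapses the derivative to the clean expression
\[
\frac{d}{dt}|\vec{\Delta}(t)|^2 = -6\sum_{j=1}^3|\vec{b}^{(j)}(t)|(\Delta^{(j)}(t))^2 = -6\,\bigl(\mathbb{B}(t)\vec{\alpha}(t)\cdot\vec{\alpha}(t)\bigr) \le 0,
\]
where the last equality is the standard weighted-graph-Laplacian identity $\vec{\alpha}\cdot\mathbb{B}\vec{\alpha} = \sum_j |\vec{b}^{(j)}|(\Delta^{(j)})^2$, and non-positivity is immediate either term-by-term (each $|\vec{b}^{(j)}|\ge 0$) or via Lemma \ref{lem:3.1}. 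Integrating from $0$ to $t$ produces the desired estimate \eqref{eq:5.4}.

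The only non-routine step is the collapse of the cross term; without exploiting the constraint $\sum_j \Delta^{(j)} = 0$, one is left with a quadratic form whose sign is not manifest, and a naive AM--GM bound on $\Delta^{(k)}\Delta^{(k+1)}$ fails to be dominated by $\sum_j |\vec{b}^{(j)}|(\Delta^{(j)})^2$ uniformly in the edge weights. Recognizing that $\vec{\Delta}$ lives in the two-dimensional zero-sum subspace is what removes this obstacle and produces the compact identity $\frac{d}{dt}|\vec{\Delta}|^2 = -6\,(\mathbb{B}\vec{\alpha}\cdot\vec{\alpha})$, a natural companion to the dissipation equality of Proposition \ref{prop:5.1} and the $|\vec{\alpha}|^2$-bound of Proposition \ref{prop:5.2}.
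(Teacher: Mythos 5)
Your proof is correct and is essentially the paper's argument written componentwise: both reduce to the identity $\frac{d}{dt}|(\mathbb{A}-\mathbb{I})\vec{\alpha}|^2=-6(\mathbb{B}\vec{\alpha}\cdot\vec{\alpha})\leq0$, the paper via the matrix identity $\mathstrut{}^t(\mathbb{A}-\mathbb{I})(\mathbb{A}-\mathbb{I})\mathbb{B}=3\mathbb{B}$ together with the non-negative definiteness of $\mathbb{B}$ from Lemma \ref{lem:3.1}, and you via the zero-sum constraint $\sum_j\Delta^{(j)}=0$, which is the same structural fact (the columns of $\mathbb{B}$ sum to zero). Your sum-of-squares form $-6\sum_j|\vec{b}^{(j)}|(\Delta^{(j)})^2$ makes the sign manifest without appealing to Lemma \ref{lem:3.1}, but the route is the same.
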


\begin{proof}
 [Proof of Proposition \ref{prop:5.3}]
 We take a derivative on the misorientation $\Delta\alpha^{(j)}$ with
 respect to $t$,
 \begin{equation}\label{eq:5.5a}
  \frac{d}{dt}
   \Delta\alpha^{(j)}
   =
   \alpha_t^{(j-1)}
   -
   \alpha_t^{(j)}
   =
   -
   2\sigma_\MisOriAngle(\Delta\alpha^{(j)})|\vec{b}^{(j)}|
   +
   \sigma_\MisOriAngle(\Delta\alpha^{(j-1)})|\vec{b}^{(j-1)}|
   +
   \sigma_\MisOriAngle(\Delta\alpha^{(j+1)})|\vec{b}^{(j+1)}|.
 \end{equation}
 Next we multiply (\ref{eq:5.5a}) by $\Delta\alpha^{(j)}$ and take the sum for
 $j=1,2,3$, we obtain,
 \begin{equation}
  \label{eq:5.5}
   \begin{split}
    &\quad
    \frac12\frac{d}{dt}
    \left(
    \sum_{j=1}^3
    (\Delta\alpha^{(j)}(t))^2
    \right) \\
    &=
    \sum_{j=1}^3
    \left(
    -
    2\sigma_\MisOriAngle(\Delta\alpha^{(j)})|\vec{b}^{(j)}|
    +
    \sigma_\MisOriAngle(\Delta\alpha^{(j-1)})|\vec{b}^{(j-1)}|
    +
    \sigma_\MisOriAngle(\Delta\alpha^{(j+1)})|\vec{b}^{(j+1)}|
    \right)
    \Delta\alpha^{(j)} \\
    &=
    \sum_{j=1}^3
    \sigma_\MisOriAngle(\Delta\alpha^{(j)})|\vec{b}^{(j)}|
    \left(
    -
    2\Delta\alpha^{(j)}
    +
    \Delta\alpha^{(j+1)}
    +
    \Delta\alpha^{(j-1)}
    \right) \\
    &=
    -3
    \sum_{j=1}^3
    \sigma_\MisOriAngle(\Delta\alpha^{(j)})|\vec{b}^{(j)}|
    \Delta\alpha^{(j)}.
   \end{split}
 \end{equation}

 Next, integrating \eqref{eq:5.5} with respect to $t$, we obtain,
 \begin{equation}
  \label{eq:5.4a}
   \sum_{j=1}^3
   (\Delta\alpha^{(j)}(t))^2
   +
   6
   \sum_{j=1}^3
   \int_0^t
   \sigma_\MisOriAngle(\Delta\alpha^{(j)})|\vec{b}^{(j)}|
   \Delta\alpha^{(j)}
   \, d\tau
   =
   \sum_{j=1}^3
   (\Delta\alpha^{(j)}(0))^2.
 \end{equation}
 Similar to the Proposition \ref{prop:5.2}, we use the convexity
 assumption \eqref{eq:2.Assumption2}, hence we obtain final result
 \eqref{eq:5.4}.
\end{proof}

\begin{remark}\label{rem:5.5}
 1. Usually, the misorientations are assumed to be bounded by
some constant, hence the orientations are also bounded. In 2D case, it
is reasonable to consider
misorientations in the interval between $-\pi/4$ and $\pi/4$ (see, for example, \cite{DK:gbphysrev}). In this
case,  one can consider the orientations within $-\pi/8$ and
$\pi/8$.\\
 2. Proposition \ref{prop:5.3} guarantees consistency for
 misorientations, which is $-\pi/4\leq\Delta\alpha^{(j)}(t)\leq\pi/4$,
 see work, for example,
 (\cite{DK:BEEEKT,DK:gbphysrev,MR2772123,MR3729587,MR3333842}) for
 bounds on misorientation in 2D. Indeed, if the $l^2$ sum of three initial
 misorientations is bounded by $\pi/4$, that is
 $(\sum_{j=1}^3(\Delta\alpha^{(j)}(0))^2)^\frac12\leq\pi/4$, then the
 magnitude of the misorientation has the same bounds
 $|\Delta\alpha^{(j)}(t)|<\pi/4$ for $t>0$.
\end{remark}

\section{Uniqueness and continuous dependence}
\label{sec:6}

In this section, we show uniqueness and continuous dependence on the
initial data of the solution of the system \eqref{eq:4.2}.

\begin{lemma}
 \label{lem:6.1}
 For $\vec{x}^{(1)}$, $\vec{x}^{(2)}$, $\vec{x}^{(3)}\in \R^2$,
 $\vec{a}_{01}, \vec{a}_{02}\in\R^2$, and $\vec{\alpha}_{01},
 \vec{\alpha}_{02}\in\R^3$, assume that $(\vec{\alpha}_1 (t),\vec{a}_1(t))$ and
 $(\vec{\alpha}_2(t),\vec{a}_2(t))$ are classical solutions of
 \eqref{eq:4.2}  on time interval $0\leq t\leq
 T$,
 associated with the given initial data $(\vec{\alpha}_{01},\vec{a}_{01})$ and
 $(\vec{\alpha}_{02},\vec{a}_{02})$, respectively. Next, assume that there exists a constant $\Cl{const:6.1}>0$ such that
 $|\vec{b}^{(j)}_k(t)|\geq \Cr{const:6.1}$ for $0\leq t\leq T$,
 $j=1,2,3$ and $k=1,2$. Here, $\vec{b}^{(j)}_k(t):=\vec{x}^{(j)}
 -\vec{a}_k (t)$, $j=1,2,3$ and $k=1,2$. Then,
 \begin{equation}
  \label{eq:6.3}
  \frac{d}{dt}
   (
   |\vec{\alpha}_1-\vec{\alpha}_2|^2
   +
   |\vec{a}_1-\vec{a}_2|^2
   )
   \leq
   \Cl{const:6.6}(|\vec{\alpha}_1-\vec{\alpha}_2|^2
   +
   |\vec{a}_1-\vec{a}_2|^2)
 \end{equation}
 holds, where $\Cr{const:6.6}>0$ is a positive constant that is
 independent of $(\vec{\alpha}_1,\vec{a}_1)$ and
 $(\vec{\alpha}_2,\vec{a}_2)$.
\end{lemma}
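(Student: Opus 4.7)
The plan is to derive \eqref{eq:6.3} by a direct energy-type estimate on the difference of the two solutions. Subtract the $\vec{\alpha}$-equations in \eqref{eq:4.2} to get
\begin{equation*}
 \frac{d}{dt}(\vec{\alpha}_1-\vec{\alpha}_2)
 =
 -\mathbb{B}_1(t)(\vec{\alpha}_1-\vec{\alpha}_2)
 -(\mathbb{B}_1(t)-\mathbb{B}_2(t))\vec{\alpha}_2,
\end{equation*}
take the inner product with $\vec{\alpha}_1-\vec{\alpha}_2$, and use that $\mathbb{B}_1(t)$ is non-negative definite (Lemma \ref{lem:3.1}) to discard the first term on the right. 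Then I would estimate the commutator term $(\mathbb{B}_1-\mathbb{B}_2)\vec{\alpha}_2$ using Lemma \ref{lem:3.3}: its entries differ by $||\vec{b}_1^{(j)}|-|\vec{b}_2^{(j)}||\leq |\vec{b}_1^{(j)}-\vec{b}_2^{(j)}|=|\vec{a}_1-\vec{a}_2|$, so
\begin{equation*}
 |(\mathbb{B}_1(t)-\mathbb{B}_2(t))\vec{\alpha}_2|
 \leq C|\vec{\alpha}_2(t)|\,|\vec{a}_1-\vec{a}_2|.
\end{equation*}
Since $|\vec{\alpha}_2(t)|$ is controlled by $|\vec{\alpha}_{02}|$ by Proposition \ref{prop:5.2}, Young's inequality yields $\frac{d}{dt}|\vec{\alpha}_1-\vec{\alpha}_2|^2\leq C_0(|\vec{\alpha}_1-\vec{\alpha}_2|^2+|\vec{a}_1-\vec{a}_2|^2)$.

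For the position equation, I would subtract the $\vec{a}$-equations and split the difference in the standard way,
\begin{equation*}
 \sigma_1^{(j)}\frac{\vec{b}_1^{(j)}}{|\vec{b}_1^{(j)}|}
 -\sigma_2^{(j)}\frac{\vec{b}_2^{(j)}}{|\vec{b}_2^{(j)}|}
 =
 (\sigma_1^{(j)}-\sigma_2^{(j)})\frac{\vec{b}_1^{(j)}}{|\vec{b}_1^{(j)}|}
 +\sigma_2^{(j)}\Bigl(\frac{\vec{b}_1^{(j)}}{|\vec{b}_1^{(j)}|}-\frac{\vec{b}_2^{(j)}}{|\vec{b}_2^{(j)}|}\Bigr),
\end{equation*}
with $\sigma_k^{(j)}=1+\tfrac12(\alpha_k^{(j-1)}-\alpha_k^{(j)})^2$. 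The factor $|\sigma_1^{(j)}-\sigma_2^{(j)}|$ is Lipschitz in $\vec{\alpha}$ with constant depending on $|\vec{\alpha}_{01}|+|\vec{\alpha}_{02}|$ (same algebra as in the proof of Lemma \ref{lem:4.5}), and by that same lemma's computation \eqref{eq:4.12} together with the hypothesis $|\vec{b}_k^{(j)}(t)|\geq \Cr{const:6.1}$,
\begin{equation*}
 \Bigl|\frac{\vec{b}_1^{(j)}}{|\vec{b}_1^{(j)}|}-\frac{\vec{b}_2^{(j)}}{|\vec{b}_2^{(j)}|}\Bigr|
 \leq \frac{2}{\Cr{const:6.1}}|\vec{a}_1-\vec{a}_2|.
\end{equation*}
Also $\sigma_2^{(j)}\leq 1+2|\vec{\alpha}_{02}|^2$. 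Taking the inner product with $\vec{a}_1-\vec{a}_2$ and applying Young's inequality gives $\frac{d}{dt}|\vec{a}_1-\vec{a}_2|^2\leq C_1(|\vec{\alpha}_1-\vec{\alpha}_2|^2+|\vec{a}_1-\vec{a}_2|^2)$ with $C_1$ depending only on $|\vec{\alpha}_{01}|$, $|\vec{\alpha}_{02}|$ and $\Cr{const:6.1}$.

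Adding the two estimates gives \eqref{eq:6.3} with $\Cr{const:6.6}=C_0+C_1$. I do not expect a serious obstacle: the only subtle points are (i) using the a priori bound of Proposition \ref{prop:5.2} to turn the quadratic nonlinearity in $\vec{\alpha}$ and the factor $|\vec{\alpha}_2|$ in the $\mathbb{B}$-commutator into bounded coefficients, and (ii) using the uniform lower bound $|\vec{b}_k^{(j)}(t)|\geq \Cr{const:6.1}$ to Lipschitz-control the singular map $\vec{b}\mapsto \vec{b}/|\vec{b}|$ exactly as in estimate \eqref{eq:4.12}. Both ingredients are already available from the earlier sections, so the whole argument reduces to a careful bookkeeping of constants.
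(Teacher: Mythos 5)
Your proposal is correct and follows essentially the same route as the paper's proof: the same splitting of $\mathbb{B}_1\vec{\alpha}_1-\mathbb{B}_2\vec{\alpha}_2$ into a commutator term plus a non-negative definite term (you merely swap which solution sits in the commutator), the same decomposition of the $\vec{a}$-equation difference, the bound \eqref{eq:4.12} with the lower bound $\Cr{const:6.1}$, and the maximum principle plus Young's inequality to close the Gronwall-type estimate. The only cosmetic difference is that the paper bounds $\sigma_2^{(j)}$ via the misorientation estimate of Proposition \ref{prop:5.3} while you use Proposition \ref{prop:5.2}; both yield a constant depending only on the initial data, so the result is the same.
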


\begin{remark}
 To be precise, the constant
 $\Cr{const:6.6}>0$, in Lemma \ref{lem:6.1}, depends on $\Cr{const:2.Assumption1}$,
 $\Cr{const:6.1}$, $E_1(0)=\sum_{j=1}^{3}
 \sigma(\Delta\alpha_1^{(j)}(0)) |\vec{b}^{(j)}(0)|$, and
 \begin{equation}
  \label{eq:6.6}
  M:=\sup
   \left\{
    |\sigma(\MisOriAngle)|
    +
    |\sigma_\MisOriAngle(\MisOriAngle)|
    +
    \frac{|\sigma_\MisOriAngle(\MisOriAngle_1)-\sigma_\MisOriAngle(\MisOriAngle_2)|}{|\MisOriAngle_1-\MisOriAngle_2|}:
    |\MisOriAngle|,|\MisOriAngle_1|, |\MisOriAngle_2|\leq\max_{k=1,2}(\sum_{j=1}^3|\Delta\alpha_k^{(j)}(0)|^2)^\frac12
   \right\}.
 \end{equation}
\end{remark}

\begin{proof}
 [Proof of Lemma \ref{lem:6.1}]
 %
 Using the equation \eqref{eq:4.2}, we have that,
 \[
  \begin{split}
  &\quad\frac{d}{dt}
  (\alpha^{(j)}_1-\alpha^{(j)}_2) \\
  &=
  -
  \left(
  \sigma_\MisOriAngle(\Delta\alpha^{(j+1)}_1)|\vec{b}_1^{(j+1)}|
  -\sigma_\MisOriAngle(\Delta\alpha^{(j+1)}_2)|\vec{b}_2^{(j+1)}|
  \right)
  +
  \left(
  \sigma_\MisOriAngle(\Delta\alpha^{(j)}_1)|\vec{b}_1^{(j)}|
  -\sigma_\MisOriAngle(\Delta\alpha^{(j)}_2)|\vec{b}_2^{(j)}|
  \right),
 \end{split}
\]
 and, hence, multiplying by $\alpha^{(j)}_1-\alpha^{(j)}_2$ and taking
 the sum for $j=1,2,3$, we obtain,
 \begin{equation}
  \label{eq:6.1a}
   \begin{split}
    \frac12\frac{d}{dt}
    |\vec{\alpha}_1-\vec{\alpha}_2|^2
    &=
    -
    \sum_{j=1}^3
    \left(
    \sigma_\MisOriAngle(\Delta\alpha^{(j+1)}_1)|\vec{b}_1^{(j+1)}|
    -\sigma_\MisOriAngle(\Delta\alpha^{(j+1)}_2)|\vec{b}_2^{(j+1)}|
    \right)
    (\alpha_1^{(j)}-\alpha_2^{(j)}) \\
    &\qquad
    +
    \sum_{j=1}^3
    \left(
    \sigma_\MisOriAngle(\Delta\alpha^{(j)}_1)|\vec{b}_1^{(j)}|
    -\sigma_\MisOriAngle(\Delta\alpha^{(j)}_2)|\vec{b}_2^{(j)}|
    \right)
    (\alpha_1^{(j)}-\alpha_2^{(j)}). \\
   \end{split}
 \end{equation}
 The estimate for the first term on the right hand side
 of (\ref{eq:6.1a}) is obtained using Lipschitz continuity of
 $\sigma_\MisOriAngle$, \eqref{eq:5.6}, and \eqref{eq:5.3},
 \begin{equation*}
  \begin{split}
   &\quad
   \left(
   \sigma_\MisOriAngle(\Delta\alpha^{(j+1)}_1)|\vec{b}_1^{(j+1)}|
   -\sigma_\MisOriAngle(\Delta\alpha^{(j+1)}_2)|\vec{b}_2^{(j+1)}|
   \right)
   (\alpha_1^{(j)}-\alpha_2^{(j)}) \\
   &\leq
   |\vec{\alpha}_1-\vec{\alpha}_2|
   \left(
   \left|
   \sigma_\MisOriAngle(\Delta\alpha^{(j+1)}_1)
   -
   \sigma_\MisOriAngle(\Delta\alpha^{(j+1)}_2)
   \right|
   |\vec{b}_1^{(j+1)}|
   +
   \left|
   \sigma_\MisOriAngle(\Delta\alpha^{(j+1)}_2)
   \right|
   |\vec{b}_1^{(j+1)}-\vec{b}_2^{(j+1)}|
   \right) \\
   &\leq
   |\vec{\alpha}_1-\vec{\alpha}_2|
   \left(
   \frac{M}{\Cr{const:2.Assumption1}}E_1(0)
   \left|
   \Delta\alpha^{(j+1)}_1
   -
   \Delta\alpha^{(j+1)}_2
   \right|
   +
   M
   |\vec{a}_1-\vec{a}_2|
   \right) \\
   &\leq
   \frac{2M}{\Cr{const:2.Assumption1}}E_1(0)
   |\vec{\alpha}_1-\vec{\alpha}_2|^2
   +
   M
   |\vec{\alpha}_1-\vec{\alpha}_2|
   |\vec{a}_1-\vec{a}_2|,
  \end{split}
 \end{equation*}
 where the constant $M>0$ is given by \eqref{eq:6.6}
 and $E_1(0)=\sum_{j=1}^{3} \sigma(\Delta\alpha_1^{(j)}(0))
 |\vec{b}^{(j)}(0)|$. The second term on the right hand side of
 \eqref{eq:6.1a} can be handled the same way. Next, using
 the Young's inequality for the estimate in the right-hand side of
 (\ref{eq:6.1a}), we deduce,
 \begin{equation}
  \label{eq:6.1}
   \frac{d}{dt}
   |\vec{\alpha}_1-\vec{\alpha}_2|^2
   \leq
   6M
   \left(
    \frac{4}{\Cr{const:2.Assumption1}}
    E(0)
    +
    1
   \right)
   |\vec{\alpha}_1-\vec{\alpha}_2|^2
   +
   6M
   |\vec{a}_1-\vec{a}_2|^2
   .
 \end{equation}
 Similarly, from the equation \eqref{eq:4.2}, we have that,
 \[
 \begin{split}
  \frac{d}{dt}
  (\vec{a}_1-\vec{a}_2)
  &=
  \sum_{j=1}^3
  \sigma(\Delta\alpha_1^{(j)})
  \frac{\vec{b}_1^{(j)}}{|\vec{b}_1^{(j)}|}
  -
  \sigma(\Delta\alpha_2^{(j)})
  \frac{\vec{b}_2^{(j)}}{|\vec{b}_2^{(j)}|}  \\
  &=
  \sum_{j=1}^3
  \left(
  \sigma(\Delta\alpha_1^{(j)})
  -
  \sigma(\Delta\alpha_2^{(j)})
  \right)
  \frac{\vec{b}_1^{(j)}}{|\vec{b}_1^{(j)}|}
  +
  \sum_{j=1}^3
  \sigma(\Delta\alpha_2^{(j)})
  \left(
  \frac{\vec{b}_1^{(j)}}{|\vec{b}_1^{(j)}|}
  -
  \frac{\vec{b}_2^{(j)}}{|\vec{b}_2^{(j)}|}
  \right).
 \end{split}
 \]
 Hence, we obtain,
 \begin{equation}
  \label{eq:6.1b}
  \begin{split}
   &\quad \frac12\frac{d}{dt}
   |\vec{a}_1-\vec{a}_2|^2 \\
   &=
   \sum_{j=1}^3
   \left(
   \sigma(\Delta\alpha_1^{(j)})
   -
   \sigma(\Delta\alpha_2^{(j)})
   \right)
   \left(
   \frac{\vec{b}_1^{(j)}}{|\vec{b}_1^{(j)}|}
   \cdot
   (\vec{a}_1-\vec{a}_2)
   \right)
   +
   \sum_{j=1}^3
   \sigma(\Delta\alpha_2^{(j)})
   \left(
   \frac{\vec{b}_1^{(j)}}{|\vec{b}_1^{(j)}|}
    -
   \frac{\vec{b}_2^{(j)}}{|\vec{b}_2^{(j)}|}
   \right)
   \cdot
   \left(
   \vec{a}_1-\vec{a}_2
   \right) \\
   &\leq
   \sum_{j=1}^3
   M
   |\Delta\alpha_1^{(j)}-\Delta\alpha_2^{(j)}|
   \left|
   \vec{a}_1-\vec{a}_2
   \right|
   +
   \sum_{j=1}^3
   \sigma(\Delta\alpha_2^{(j)})
   \left|
   \frac{\vec{b}_1^{(j)}}{|\vec{b}_1^{(j)}|}
    -
   \frac{\vec{b}_2^{(j)}}{|\vec{b}_2^{(j)}|}
   \right|
   \left|
   \vec{a}_1-\vec{a}_2
   \right| \\
   &\leq
   6M
   |\vec{\alpha}_1-\vec{\alpha}_2|
   \left|
   \vec{a}_1-\vec{a}_2
   \right|
   +
   \sum_{j=1}^3
   \sigma(\Delta\alpha_2^{(j)})
   \left|
   \frac{\vec{b}_1^{(j)}}{|\vec{b}_1^{(j)}|}
    -
   \frac{\vec{b}_2^{(j)}}{|\vec{b}_2^{(j)}|}
   \right|
   \left|
   \vec{a}_1-\vec{a}_2
   \right|.
  \end{split}
 \end{equation}
 Next, let us estimate the second term on the right-hand side of the
 (\ref{eq:6.1b}).
 Applying \eqref{eq:4.12}, and using that $|\vec{b}^{(j)}_k(t)|\geq
 \Cr{const:6.1}$, $\vec{b}^{(j)}_k(t)=\vec{x}^{(j)} -\vec{a}_k (t)$ for
 $j=1,2,3$ and $k=1,2$, we have that,
 \begin{equation*}
  \sigma(\Delta\alpha_2^{(j)})
   \left|
    \frac{\vec{b}_1^{(j)}}{|\vec{b}_1^{(j)}|}
    -
    \frac{\vec{b}_2^{(j)}}{|\vec{b}_2^{(j)}|}
   \right|
   \left|
    \vec{a}_1-\vec{a}_2
   \right|
   =
   \frac{2}{|\vec{b}_1^{(j)}|}
   \sigma(\Delta\alpha_2^{(j)})
   \left|
    \vec{b}_1^{(j)}
    -
    \vec{b}_2^{(j)}
   \right|
   \left|
    \vec{a}_1-\vec{a}_2
   \right|
   \leq
   \frac{2M}{\Cr{const:6.1}}
   \left|
    \vec{a}_1-\vec{a}_2
   \right|^2.
 \end{equation*}
 Hence, we have that,
 \begin{equation}
  \label{eq:6.2a}
   \frac{d}{dt}
   |\vec{a}_1-\vec{a}_2|^2
   \leq
   6M
   |\vec{\alpha}_1-\vec{\alpha}_2|^2
   +
   6M
   \left(
    \frac{2}{\Cr{const:6.1}}
    +
    1
   \right)
   \left|
    \vec{a}_1-\vec{a}_2
   \right|^2.
 \end{equation}
 Therefore, by \eqref{eq:6.1} and \eqref{eq:6.2a}, we have,
 \[
 \frac{d}{dt}
 (
 |\vec{\alpha}_1-\vec{\alpha}_2|^2
 +
 |\vec{a}_1-\vec{a}_2|^2
 )
 \leq
 \Cl{const:6.4}|\vec{\alpha}_1-\vec{\alpha}_2|^2
 +
 \Cl{const:6.5}|\vec{a}_1-\vec{a}_2|^2,
 \]
 where,
 \[
 \Cr{const:6.4}
 :=
 12M
 \left(
 \frac{2}{\Cr{const:2.Assumption1}}E(0)+1\right)
 ,\quad
 \Cr{const:6.5}
 :=
 12M
 \left(
 \frac{1}{\Cr{const:6.1}}
 +
 1
 \right).
 \]
\end{proof}

By the neighboring inequality, we can now show uniqueness of the
classical solution to the system \eqref{eq:4.2}.

\begin{theorem}
 [Uniqueness] \label{thm:6.1}
 Consider $\vec{x}^{(1)}$, $\vec{x}^{(2)}$, $\vec{x}^{(3)}\in \R^2$, and
 initial data
 $\vec{a}_0\in\R^2$ and $\vec{\alpha}_0\in\R^3$. Assume also, that
 there exists a constant $\Cl{const:6.2}>0$, such that
 $|\vec{b}^{(j)}_k(t)|\geq \Cr{const:6.2}$ for $0\leq t\leq T$, $j=1,2,3$
 and $k=1,2$. Then, there exists a unique classical solution
 $(\vec{\alpha}(t),\vec{a}(t))$ $0\leq t\leq
 T$ of the system \eqref{eq:4.2}.
\end{theorem}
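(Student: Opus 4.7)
The plan is to deduce uniqueness as an immediate consequence of the neighboring inequality established in Lemma~\ref{lem:6.1} combined with Gronwall's inequality. The continuous dependence statement mentioned in the section heading would follow along exactly the same lines, but for the uniqueness statement proper one considers two solutions arising from the \emph{same} initial data.

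First, I would suppose $(\vec{\alpha}_1(t),\vec{a}_1(t))$ and $(\vec{\alpha}_2(t),\vec{a}_2(t))$ are two classical solutions of \eqref{eq:4.2} on $[0,T]$ with identical initial data $\vec{\alpha}_{01}=\vec{\alpha}_{02}=\vec{\alpha}_0$ and $\vec{a}_{01}=\vec{a}_{02}=\vec{a}_0$. Because the theorem assumes that $|\vec{b}^{(j)}_k(t)|\geq \Cr{const:6.2}>0$ for $t\in[0,T]$, $j=1,2,3$, $k=1,2$, all hypotheses of Lemma~\ref{lem:6.1} are satisfied (with $\Cr{const:6.1}=\Cr{const:6.2}$). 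Consequently, setting
\[
f(t):=|\vec{\alpha}_1(t)-\vec{\alpha}_2(t)|^2+|\vec{a}_1(t)-\vec{a}_2(t)|^2,
\]
Lemma~\ref{lem:6.1} yields the scalar differential inequality $f'(t)\leq \Cr{const:6.6}\, f(t)$ on $[0,T]$, where $\Cr{const:6.6}$ depends only on $\vec{\alpha}_0$ and $\Cr{const:6.2}$.

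Next, I would apply Gronwall's inequality. Since $f(0)=0$ by the coincidence of initial data, integrating the differential inequality gives $0\leq f(t)\leq f(0)\exp(\Cr{const:6.6} t)=0$ for all $t\in[0,T]$. Hence $f\equiv0$, which forces $\vec{\alpha}_1\equiv\vec{\alpha}_2$ and $\vec{a}_1\equiv\vec{a}_2$ on $[0,T]$, establishing uniqueness. Existence of a classical solution is already guaranteed by Theorem~\ref{thm:4.1} under the geometric assumption \eqref{eq:4.13} and the smallness condition \eqref{eq:4.10}; note that the lower bound $|\vec{b}^{(j)}(t)|\geq \Cr{const:6.2}$ assumed here is automatic for the solution constructed in Theorem~\ref{thm:4.1} (with $\Cr{const:6.2}=|\vec{b}^{(j)}_\infty|-\Cr{const:4.2}$), so the existence portion needs no separate argument.

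There is no real obstacle in this proof: all the analytic work has been absorbed into Lemma~\ref{lem:6.1}, and once the differential inequality is in hand the conclusion is a one-line application of Gronwall. The only subtle point to flag is that the uniform lower bound $|\vec{b}^{(j)}_k(t)|\geq \Cr{const:6.2}$ must hold for \emph{both} candidate solutions simultaneously; this is why it is phrased as a hypothesis on $k=1,2$ in the statement, and it is precisely the condition that prevents the velocity field in \eqref{eq:4.2} from becoming singular (via the unit vectors $\vec{b}^{(j)}/|\vec{b}^{(j)}|$) along either trajectory, which is needed to apply Lemma~\ref{lem:6.1}.
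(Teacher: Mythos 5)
Your proposal is correct and is exactly the argument the paper intends: the paper proves Theorem \ref{thm:6.1} by invoking the differential inequality of Lemma \ref{lem:6.1} for two solutions with coinciding initial data and closing with Gronwall's inequality, just as you do. Your additional remark that the required lower bound on $|\vec{b}^{(j)}_k(t)|$ is automatic for the solution built in Theorem \ref{thm:4.1} is a correct and worthwhile observation, but it does not change the substance of the argument.
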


Note that,  $\Cr{const:6.6}$ stays bounded when
$(\vec{\alpha}_{01},\vec{a}_{01}) \rightarrow
(\vec{\alpha}_{02},\vec{a}_{02})$. Thus, we obtain,

\begin{theorem}
 [Continuous dependence on the initial data]
 \label{thm:6.2}
 For $\vec{x}^{(1)}$, $\vec{x}^{(2)}$, $\vec{x}^{(3)}\in \R^2$,
 $\vec{a}_{01}, \vec{a}_{02}\in\R^2$ and $\vec{\alpha}_{01},
 \vec{\alpha}_{02}\in\R^3$, let $(\vec{\alpha}_1,\vec{a}_1)$ and
 $(\vec{\alpha}_2,\vec{a}_2)$ be two classical solutions of the system
 \eqref{eq:4.2} on $0\leq t\leq
 T$,
 associated with the given initial data $(\vec{\alpha}_{01},\vec{a}_{01})$ and
 $(\vec{\alpha}_{02},\vec{a}_{02})$,  respectively. Assume, that there exists a constant $\Cl{const:6.3}>0$, such that
 $|\vec{b}^{(j)}_k(t)|\geq \Cr{const:6.3}$ for $0\leq t\leq T$,
 $j=1,2,3$ and $k=1,2$. Then,
 \begin{equation}
  \label{eq:6.4}
   |\vec{\alpha}_1-\vec{\alpha}_2|^2
   +
   |\vec{a}_1-\vec{a}_2|^2
   \leq
   e^{\Cr{const:6.6}t}
   (|\vec{\alpha}_{01}-\vec{\alpha}_{02}|^2
   +
   |\vec{a}_{01}-\vec{a}_{02}|^2)
 \end{equation}
 holds, where $\Cr{const:6.6}>0$ is a positive constant given in Lemma
 \ref{lem:6.1}. In particular, continuous dependence on the
 initial data holds, namely,
 \[
 \|\vec{\alpha}_1-\vec{\alpha}_2\|_{C([0,T])}
 +
 \|\vec{a}_1-\vec{a}_2\|_{C([0,T])}
 \rightarrow0
 \]
 as $(\vec{\alpha}_{01},\vec{a}_{01}) \rightarrow
 (\vec{\alpha}_{02},\vec{a}_{02})$.
\end{theorem}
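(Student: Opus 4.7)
The plan is to apply Grönwall's inequality directly to the differential inequality already established in Lemma \ref{lem:6.1}. Since all the technical work (the matrix estimates, the use of the maximum principle from Proposition \ref{prop:5.2}, and the misorientation bound from Proposition \ref{prop:5.3}) is already packaged into the constant $\Cr{const:6.6}$ in that lemma, the remainder is essentially routine.

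First I would set
\[
 F(t) := |\vec{\alpha}_1(t)-\vec{\alpha}_2(t)|^2 + |\vec{a}_1(t)-\vec{a}_2(t)|^2,
\]
and invoke Lemma \ref{lem:6.1}, whose hypothesis $|\vec{b}_k^{(j)}(t)|\geq\Cr{const:6.3}$ is exactly the hypothesis of the theorem (one takes $\Cr{const:6.1}:=\Cr{const:6.3}$ in the lemma). This yields $F'(t)\leq \Cr{const:6.6}F(t)$ for all $0\leq t\leq T$, with $\Cr{const:6.6}$ depending only on $\vec{\alpha}_{01}$, $\vec{\alpha}_{02}$, and $\Cr{const:6.3}$. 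Then Grönwall's inequality in its standard differential form gives $F(t)\leq e^{\Cr{const:6.6}t}F(0)$, which is precisely \eqref{eq:6.4}.

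To conclude with continuous dependence, I would take the pointwise bound \eqref{eq:6.4}, pass to the supremum over $0\leq t\leq T$, and use that $e^{\Cr{const:6.6}T}$ is a finite constant independent of the initial data (the constant $\Cr{const:6.6}$ remains bounded as $(\vec{\alpha}_{02},\vec{a}_{02})\to(\vec{\alpha}_{01},\vec{a}_{01})$, since it depends continuously on $|\vec{\alpha}_{01}|+|\vec{\alpha}_{02}|$ and on the fixed $\Cr{const:6.3}$). Taking square roots then yields
\[
 \|\vec{\alpha}_1-\vec{\alpha}_2\|_{C([0,T])} + \|\vec{a}_1-\vec{a}_2\|_{C([0,T])}
 \leq \sqrt{2}\,e^{\Cr{const:6.6}T/2}\bigl(|\vec{\alpha}_{01}-\vec{\alpha}_{02}|^2+|\vec{a}_{01}-\vec{a}_{02}|^2\bigr)^{1/2},
\]
which tends to zero as $(\vec{\alpha}_{02},\vec{a}_{02})\to(\vec{\alpha}_{01},\vec{a}_{01})$.

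There is no substantive obstacle here, since the heavy lifting was absorbed into Lemma \ref{lem:6.1}. The only mildly delicate point is verifying that $\Cr{const:6.6}$ can be chosen uniformly in a neighborhood of the reference initial datum in order to obtain genuine continuous dependence (as opposed to merely a Lipschitz-type estimate with a datum-dependent constant); this is immediate from the explicit formula for $\Cr{const:6.6}$ in terms of $|\vec{\alpha}_{01}|+|\vec{\alpha}_{02}|$ and $\Cr{const:6.3}$ given at the end of the proof of Lemma \ref{lem:6.1}.
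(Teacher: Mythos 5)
Your proposal is correct and follows exactly the route the paper intends: the paper derives Theorem \ref{thm:6.2} directly from the differential inequality \eqref{eq:6.3} of Lemma \ref{lem:6.1} via Gr\"onwall, together with the observation that $\Cr{const:6.6}$ stays bounded as the initial data converge. Your added detail on passing to the supremum and taking square roots (with the harmless $\sqrt{2}$ factor) is a faithful completion of the argument the paper leaves implicit.
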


\section{Evolution of grain boundary network}\label{sec:7}

In this section, we extend the results obtained above for a system with a
single junction to a network of grains that have lattice
orientations $\{\alpha^{(k)}\}_{k=1}^{N^{\text{SG}}}$, grain boundaries $\{\Gamma_t^{(j)}\}_{j=1}^{N^{\text{GB}}}$ and the triple junctions $\{\vec{a}^{(l)}\}_{l=1}^{N^{\text{TJ}}}$. We
identify the lattice $\alpha^{(k)}$ with the single grain
$k$. Hence, the grain boundary energy of the entire network is defined
now as,
\begin{equation}
 E(t)
  =
  \sum_{j=1}^{N^{\text{GB}}}
  \int_{\Gamma_t^{(j)}}
  \sigma
  (
  \vec{n}^{(j)},
  \Delta^{(j)}\alpha
  )
  \,d\mathscr{H}^1,
\end{equation}
where $\Delta^{(j)}\alpha$ is a difference between the lattice orientions
of the two grains that share the same grain boundary $\Gamma^{(j)}$.
The difference $\Delta^{(j)}\alpha$ is called a misorientation of the grain boundary
$\Gamma^{(j)}$. Next, using
the same argument as in Section \ref{sec:2} for a system with a single
triple junction, we obtain similar expression for the dissipation rate of the
energy of the grain boundary network,
\begin{equation}
 \begin{split}
  \frac{d}{dt}E(t)
  &=
  -
  \sum_{j=1}^{N^{\text{GB}}}
  \int_{\Gamma^{(j)}}
  \frac{d}{ds}
  \vec{T}^{(j)}
  \,d\mathscr{H}^1
  +
  \sum_{k=1}^{N^{\text{SG}}}
  \frac{\partial E}{\partial \alpha^{(k)}}
  \frac{d\alpha^{(k)}}{dt}
  -
  \sum_{l=1}^{N^\text{TJ}}
  \sum_{\vec{a}^{(l)}\in\Gamma^{(j)}_t}
  \vec{T}^{(j)}
  \cdot
  \frac{d\vec{a}^{(l)}}{dt}.
 \end{split}
\end{equation}
Here,
\begin{equation}
 \vec{T}^{(j)}
  =
  \sigma_\NorAngle^{(j)}\hat{\vec{n}}^{(j)}
  +
  \sigma^{(j)}\hat{\vec{b}}^{(j)},
\end{equation}
and $\vec{a}^{(l)}$ denotes the triple junction where three
grain boundaries meet (we assume in our model that only triple
junctions are stable). Note that, the line tension vector $\vec{T}^{(j)}$
points toward an inward direction of the grain boundary at the triple junction $\vec{a}^{(l)}$.


Next, similar to  Section \ref{sec:2}, we obtain the following
system of differential equations to ensure that the entire system is dissipative:
\begin{equation}
 \label{eq:7.4}
 \begin{aligned}
  v^{(j)}
  &=
  \mu^{(j)}
  \frac{d}{ds}\vec{T}^{(j)}\cdot\hat{\vec{n}}^{(j)},&\quad
  &j=1,\ldots,N^{\text{GB}}, \\
  \frac{d\alpha^{(k)}}{dt}
  &=
  -\gamma
  \frac{\delta E}{\delta \alpha^{(k)}},&\quad
  &k=1,\ldots,N^{\text{SG}}, \\
  \frac{d\vec{a}^{(l)}}{dt}
  &=
  \eta
  \sum_{\vec{a}^{(l)}\in\Gamma_t^{(j)}}
  \vec{T}^{(j)},&\quad
  &l=1,\ldots,N^{\text{TJ}},
 \end{aligned}
\end{equation}
where $\mu^{(j)},\gamma,\eta>0$ are positive constants.
For simplicity of the calculations below, we further assume that
the energy density $\sigma(\vec{n},\MisOriAngle)$ is an even function with
respect to the misorientation $\MisOriAngle= \Delta^{(j)}\alpha$, that is, the misorientation
effects are symmetric with respect to the difference between the lattice
orientations.
%
%
For the two grains $k_1$ and $k_2$ with orientations $\alpha^{(k_1)}$
and $\alpha^{(k_2)}$, respectively, we introduce notation that will
be helpful for calculations below, $\Gamma^{(j)}:=\Gamma^{(j(k_1,k_2))}$
a grain boundary which is formed by grains $k_1$ and $k_2$ (See Figure
\ref{fig:2}). We also assume, that if grains $k_1$ and $k_2$ have no
common interface/grain boundary, then we
just set $\Gamma^{(j(k_1,k_2))}=\emptyset$. Then,
\begin{equation}
 \label{eq:7.2}
 \frac{\delta E}{\delta \alpha^{(k)}}
  =
  \sum_{\substack{k'=1, \\ k'\neq k}}^{N^{\text{SG}}}
  \int_{\Gamma^{(j(k,k'))}_t}
  \sigma_\MisOriAngle(\vec{n}^{(j(k,k'))},\alpha^{(k)}-\alpha^{(k')})
  \,d\mathscr{H}^1.
\end{equation}

 \begin{figure}
  \begin{minipage}{0.45\textwidth}
   \includegraphics[height=5cm]{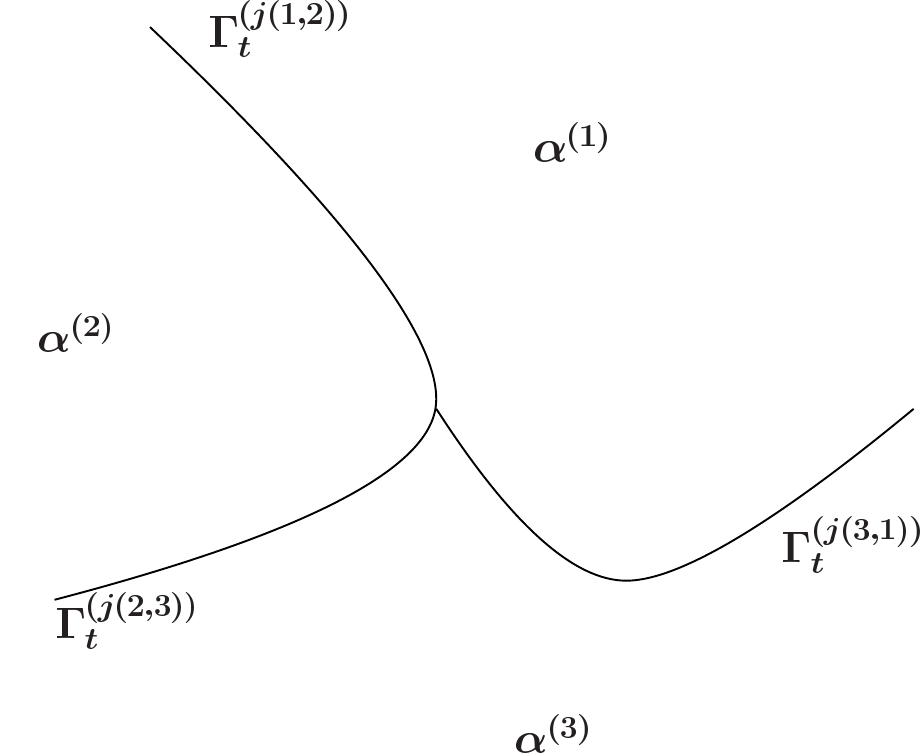}
   \caption{Example of $\Gamma^{(j(k_1,k_2))}$}
   \label{fig:2}
  \end{minipage}
 \end{figure}

We let $\mu^{(j)}\rightarrow\infty$, $\gamma=\eta=1$ and as before, we
consider surface tension
(\ref{eq:2.Assumption1})-(\ref{eq:2.Assumption3}) without normal
dependence.
Then, the problem \eqref{eq:7.4} is turned into,
\begin{equation}
 \label{eq:7.7}
  \begin{aligned}
   \Gamma_t^{(j)} &\text{is a line segment between some}\
   \vec{a}^{(l_{j,1})}\ \text{and}\ \vec{a}^{(l_{j,2})},&\quad
   &j=1,\ldots,N^{\text{GB}}, \\
   \frac{d\alpha^{(k)}}{dt}
   &=
   -
   \sum_{\substack{k'=1, \\ k'\neq k}}^{N^{\text{SG}}}
   |\Gamma^{(j(k,k'))}_t|
   \sigma_\MisOriAngle(\alpha^{(k)}-\alpha^{(k')})
   ,&\quad
   &k=1,\ldots,N^{\text{SG}}, \\
   \frac{d\vec{a}^{(l)}}{dt}
   &=
   \sum_{\vec{a}^{(l)}\in\Gamma_t^{(j)}}
  \vec{T}^{(j)},&\quad
   &l=1,\ldots,N^{\text{TJ}},
  \end{aligned}
\end{equation}

Due to the convexity assumption \eqref{eq:2.Assumption2}, we obtain the
maximum principle for $\alpha^{(k)}$. In fact, for a fixed
$j=1,\ldots,N^{\text{GB}}$, there are only two grains
$k_{j_1},k_{j_2}\in\{1,\ldots,N^{\text{SG}}\}$ such that $\Gamma^{(j)}$
is formed between grains $k_{j_1}$ and $k_{j_2}$. Using this fact, we
find that,
\begin{equation}
 \label{eq:7.20}
  \begin{split}
   &\quad
  \sum_{k=1}^{N^{\text{SG}}}
  \sum_{\substack{k'=1, \\ k'\neq k}}^{N^{\text{SG}}}
  |\Gamma^{(j(k,k'))}_t|
  \sigma_\MisOriAngle
  (\alpha^{(k)}-\alpha^{(k')})\alpha^{(k)} \\
  &=
  \sum_{j=1}^{N^{\text{GB}}}
  |\Gamma^{(j)}_t|
  \left(
  \sigma_\MisOriAngle
  (
  \alpha^{(k_{j,1})}
  -
  \alpha^{(k_{j,2})}
  )
  \alpha^{(k_{j,1})}
  +
  \sigma_\MisOriAngle
  (
  \alpha^{(k_{j,2})}
  -
  \alpha^{(k_{j,1})}
  )
  \alpha^{(k_{j,2})}
  \right)\\
  &=
  \sum_{j=1}^{N^{\text{GB}}}
  |\Gamma^{(j)}_t|
  \sigma_\MisOriAngle(
  \alpha^{(k_{j,1})}
  -
  \alpha^{(k_{j,2})}
  )
  \left(
  \alpha^{(k_{j,1})}
  -
  \alpha^{(k_{j,2})}
  \right)\geq0.
  \end{split}
\end{equation}
Thus, we can proceed now using the same arguments as in Sections \ref{sec:4}-\ref{sec:6}. To show the existence of solution of \eqref{eq:7.7}, we
integrate \eqref{eq:7.7} and rewrite in the form of integral
equations. After that, we can make a
contraction mapping argument as it was done in Section \ref{sec:4} for
a single triple junction. The key ingredient in this approach is to show
a priori lower bounds for the distance of two triple junctions,
similar to
Lemma \ref{lem:4.3}. If an initial grain boundary network is
sufficiently close to some equilibrium state, then any triple
junction is close to its associated initial position (moreover, no
critical events happen during short enough time interval). Thus,
we can obtain a priori lower bounds for the distance between the two triple junctions. 
The uniqueness and continuous dependence on the initial
data can be obtained in a similar way as discussed in
Theorem \ref{thm:6.1} and \ref{thm:6.2}.
 Indeed, as in
Remark \ref{rem:2.4}, the convexity assumption \eqref{eq:2.Assumption2}
is not needed to show the uniqueness and continuous
dependence. Nevertheless, the convexity assumption
\eqref{eq:2.Assumption2} and its consequence, the result
\eqref{eq:7.20} are important if one would like
to guarantee the maximum principle type  result for the
orientations, similar to
Proposition \ref{prop:5.2}. 
Therefore, we obtain,
\begin{theorem}
 In a grain boundary network with lattice orientations, if
 triple junctions at the initial state are sufficiently close to triple junctions
 at the equilibrium state, then
 the problem \eqref{eq:7.7} has a unique time local solution 
 and the magnitude of the orientation of each grain is bounded by the
 $l^2$ sum of the initial orientations of the grains in the network,
 that is,
 $(\alpha^{(k')}(t))^2\leq\sum_{k}(\alpha^{(k)}(0))^2$ for $t>0$.
\end{theorem}

\begin{remark}
\par Note, that the proposed model of dynamic orientations (\ref{eq:7.4}) (and, hence,
dynamic misorientations, (\ref{eq:7.7}), or Langevin type
equation if critical events/grain boundaries disappearance events are taken into account) is reminiscent of the recently developed
theory for the grain boundary character distribution (GBCD)
\cite{DK:BEEEKT,DK:gbphysrev,MR2772123,MR3729587},  which suggests that the
evolution of the GBCD satisfies a Fokker-Planck Equation (GBCD is
an empirical distribution of the relative length (in 2D) or area (in
3D) of interface with a given lattice misorientation and normal). More
details will be presented in future studies.
\par Large time asymptotic analysis of the model proposed in the
current work will be
presented in the forthcoming paper.
\end{remark}

\section*{Acknowledgments}
The authors are grateful to David Kinderlehrer for the fruitful
discussions, inspiration and motivation of the work. The authors would
also like to thank Katayun Barmak for insightful discussions related to
various aspects of the orientations/misorientations and the grain boundary
energy density. The authors are grateful to the anonymous referees for their valuable remarks and
questions, which led to significant improvements of the manuscript.  Yekaterina
Epshteyn and Masashi Mizuno acknowledge partial support of Simons
Foundation Grant No. 415673, Yekaterina
Epshteyn also acknowledges partial support of NSF DMS-1905463,  Masashi Mizuno
also acknowledges partial support of JSPS KAKENHI Grant No. 18K13446, Chun Liu acknowledges partial support of
NSF DMS-1759535 and NSF DMS-1759536.
Masashi Mizuno also would like to thank Penn State University, Illinois
Institute of Technology and the University of Utah for the hospitality
during his visit.

\end{document}